\begin{document}

\theoremstyle{plain}% default
\newtheorem{thm}{Theorem}[]
\newtheorem{lem}{Lemma}[]
\newtheorem{prop}[thm]{Proposition}
\newtheorem*{cor}{Corollary}

\theoremstyle{definition}
\newtheorem{defn}{Definition}[]
\newtheorem{conj}{Conjecture}[]
\newtheorem{exmp}{Example}[]
\newtheorem*{qtn}{Question}
\newtheorem{cl}{Claim}

\theoremstyle{remark}
\newtheorem*{rem}{Remark}
\newtheorem*{pf}{Proof}
\newtheorem*{note}{Note}
\newtheorem{case}{Case}

\title[Cantor Spectrum]{Cantor Spectrum for CMV and Jacobi Matrices with Coefficients arising from Generalized Skew-Shifts}

\author[H.\ Jun]{Hyunkyu Jun}
\address{Department of Mathematics, Rice University, Houston, TX~77005, USA}
\email{hj12@rice.edu}

\begin{abstract}
We consider continuous cocycles arising from CMV and Jacobi matrices. Assuming the Verblunsky and Jacobi coefficients arise from generalized skew-shifts, we prove that uniform hyperbolicity of the associated cocycles is $C^0$-dense. This implies that the associated CMV and Jacobi matrices have Cantor spectrum for a generic continuous sampling map.
\end{abstract}

\maketitle

\section{Introduction}
Let $X$ be a compact metric space and let $T:X\to X$ be a strictly ergodic homeomorphism (i.e., $T$ is minimal and uniquely ergodic), which fibers over an almost periodic dynamical system (generalized skew-shifts). This means there exists an infinite compact abelian group $\mathbb G$ and an onto continuous map $f:X\to \mathbb G$ such that $T(f(x))=T(x)+g$ for some $g\in \mathbb G.$ We consider CMV matrices and Jacobi matrices whose Verblunsky coefficients and respectively, Jacobi coefficients are obtained by a continuous sampling map along an orbit of $T.$ Our interest is to investigate spectral properties.

By the nature of dynamically defined Verblunsky and Jacobi coefficients, our results rely on a connection between spectral properties and dynamics of linear cocycles, which is first established by Johnson \cite{MR818861}, often called Johnson's theorem. Roughly speaking, Johnson's theorem provides a connection between the spectrum of self-adjoint linear differential operators and uniform hyperbolicity of the associated linear cocycles referred to as ``an exponential dichotomy'' in Johnson \cite{MR818861}.

 Two similar results, which are directly connected to our work are Damanik \emph{et al} \cite{MR3543643} for CMV matrices and Marx  \cite{MR3291142} for Jacobi matrices. In \cite{MR3543643}, the authors show that the uniform spectrum of a CMV matrix consists of unimodular complex numbers whose associated cocycles are not uniformly hyperbolic. Likewise, in \cite{MR3291142}, the author proves that the uniform spectrum of a Jacobi matrix consists of energies whose associated cocycles are not uniformly hyperbolic.

In this paper, we consider the continuous cocycles arising from CMV and Jacobi matrices and show that uniform hyperbolicity is $C^0$-dense in both cases. Together with the results in \cite{MR3543643} and \cite{MR3291142}, this implies that 
the uniform spectrum of a CMV  or a Jacobi matrix is a Cantor set for a generic continuous sampling map.

Let us discuss a paper, which is intimately related to our work. In Avila \emph{et al} \cite{MR2477761}, the authors consider continuous $SL(2,\mathbb R)$-cocycles with the same base dynamics as in the present paper. The authors prove that if a cocycle is not uniformly hyperbolic and its homotopy class does not display a certain obstruction, it can be $C^0$-perturbed to become uniformly hyperbolic. Using this and ``a projection lemma'', which is also proved in \cite{MR2477761}, the authors show that uniform hyperbolicity is $C^0$-dense for the cocycles arising from Schr{\"o}dinger operators. In turn, the $C^0$-density implies Cantor spectrum for a generic continuous potential.

Our work fully utilizes their results on the general $SL(2,\mathbb R)$-cocycles. In addition, the proof of the Jacobi case is a direct application of the projection lemma. From our point of view, the applicability of the projection lemma is related to the solvability of a system of equations. We were unable to find a possible solvability for the case of CMV matrices. Thus, another constructive way of proof will be provided.

The spectral theory of Schr{\"o}dinger operators and Jacobi matrices with dynamically defined potentials and coefficients respectively, has been extensively studied for the past few decades in a variety of settings, e.g., random potentials, almost periodic potentials, subshift potentials, etc. (see \cite{MR3681983} and \cite{MR3719264} for surveys.) On the other hand, the case of CMV matrices is much less understood. Damanik and Lenz \cite{damanik2007uniform} consider ergodic families of Verblunsky coefficients generated by minimal aperiodic subshifts, thus a sampling map in \cite{damanik2007uniform} may be regarded as a simple function taking finitely many values.

Conspicuously absent while interesting was the case of almost periodic Verblunsky coefficients; see \cite{MR2105089}, pp. 706--707. Bourget \emph{et al} \cite{MR1962460} studied some almost periodic case but their model is modified so that it is distinguished from true CMV matrices.  Recently, Wang and Damanik \cite{MR3912798} consider quasiperiodic Verblunsky coefficients with analytic sampling maps and show Anderson localization in the regime of positive Lyapunov exponents. Also, in the forthcoming companion paper, the authors show that the spectrum is purely absolutely continuous at a small coupling. Our work considers generalized skew-shifts, which include the almost periodic case with continuous sampling maps.

\section{Statement of results}

Let $X$ be a compact metric space and let $T:X\to X$ be a homeomorphism. Given a continuous map $A:X\to SL(2, \mathbb R)$, a \emph{continuous cocycle} $(T,A):X\times \mathbb R^2\to X\times \mathbb R^2$ is defined as $(x,v)\to (T(x), A(x)v).$ For $n\in \mathbb Z$, $A^n$ is defined by $(T,A)^n=(T^n, A^n).$

% which fibers over an almost periodic dynamical system.

% That is, there exists an infinite compact abelian group $\mathbb G$ and an onto continous map $h:X\to \mathbb G$ such that $h(T(x))=h(x)+\alpha$ for some $\alpha\in \mathbb G.$  

 \begin{defn}

A continuous cocycle 
$$(T,A): X\times \mathbb R^2 \to X\times \mathbb R^2, (x,v)\to (T(x),A(x)v)$$
is \emph{uniformly hyperbolic} if there are $C>0$ and $\lambda<1$ and, for every $x\in X,$ there exist  one dimensional subspaces $E^s_x$ and $E^u_x$ of $\mathbb R^2$ such that 
\begin{enumerate}
\item $A(x)E^s_x=E^s_{T(x)}$ and $A(x)E^u_x=E^u_{T(x)}$
\item $||A^n(x)v^s||\leq C\lambda^n||v^s||$ and $||A^{-n}(x)v^u||\leq C\lambda^n||v^u||$
\end{enumerate}
for every $v^s\in E^s_x, v^u\in E^u_x,x\in X$ and $n\geq 1.$
\label{UH}
\end{defn}

Equivalently, it is well-known that
$(T,A)$ is uniformly hyperbolic if and only if there exist constants $c > 0$ and $\sigma > 1$ such that $||A^n(x)||>c\sigma^n$ for all $x\in X$ and $n\geq 1.$ (See \cite{MR3289050}, for example.)

In fact, $E^s_x$ and $E^u_x$ are unique if they exist and depend continuously on $x\in X.$ (compare \cite{MR3289050}.) Thus, we may choose a continuous map $u$ from $X$ to the unit circle in $\mathbb R^2$ such that $u(x)\in E^u_x.$

\subsection{CMV matrices}

Let $\mathbb D:=\{z\in \mathbb C: |z|<1\}.$ Let $\mu$ be a nontrivial probability measure on $\partial \mathbb D,$ i.e., it is not supported on a finite set. Then, we may define the $n$-th monic orthogonal polynomial $\Phi_n:=\Phi_n(z;d\mu)$ by $\Phi_n\perp z^l$ for $l=0,1,\cdots , n-1.$

Thus, we have $\langle \Phi_n, \Phi_m \rangle=0$ for all $m\neq n$ in $L^2(\partial \mathbb D, d\mu).$ Naturally, orthonormal polynomials $\phi_n$ are defined as $\phi_n(z)=\Phi(z)/||\Phi(z) ||.$

It is well known that the monic orthogonal polynomials are generated by the Szeg{\" o} recursion,
$$\Phi_{n+1}(z) = z\Phi _n(z) - \overline{\alpha_n}\Phi^*_n(z)$$
where $\{\alpha_0,\alpha_1,\alpha_2,\cdots \}\subset \mathbb D$ are suitably chosen parameters, called  Verblunsky coefficients. Conversely, given a sequence $\{\alpha_0,\alpha_1,\alpha_2,\cdots \}\subset \mathbb D$ we may define monic orthogonal polynomials with respect to a nontrivial probability measure on $\partial \mathbb D$ by the Szeg{\" o} recursion. In fact, Verblunsky's theroem says there is a one-to-one correspondence between nontrivial probability measures and sequences in $ \mathbb D$.

The standard CMV matrix associated to the measures $\mu$ is a matrix  representation discovered by Cantero \emph{et al} \cite{MR1955452} for multiplication by $z\in \partial \mathbb D$ in $L^2(\partial \mathbb D,d\mu)$. The matrix is given by: 

$$
\mathcal C=\begin{bmatrix}
    & \overline {\alpha_0} & \overline{\alpha_1}\rho_0 & \rho_1\rho_0  & & & &\\
   & \rho_0 & -\overline{\alpha_1}\alpha_0 & -\rho_1\alpha_0 & & & & \\
    &  & \overline{\alpha_2}\rho_1 &-\overline{\alpha_2}\alpha_1 & \overline{\alpha_3}\rho_2  &\rho_3\rho_2 & & &\\
        &  & \rho_2\rho_1 &-\rho_2\alpha_1 & -\overline{\alpha_3}\alpha_2  &-\rho_3\rho_2 & & &\\
            &  & &  & \overline{\alpha_4}\rho_3 & -\overline{\alpha_4}\alpha_3& \overline{\alpha_5}\rho_4 & \\
               &  & &  & \rho_4\rho_3 &- \rho_4\alpha_3& -\overline{\alpha_5}\rho_4 & \\
 &  & &  &  & \ddots&  \ddots& \ddots\\
\end{bmatrix}
$$
 where $\rho_n=(1-|\alpha_n|^2)^{1/2}.$ 
 
 The basis for the representation is obtained by orthonormalizing  
 $$\{1, z, z^{-1}, z^2, z^{-2},z^3, z^{-3},\cdots \}.$$ Note that the basis for the representation is not the orthonormal polynomials. The matrix representation based on the orthonormal polynomials is called GGT representation; see \cite{MR2105088}, Section 4.1.
 
 Let us briefly discuss why the CMV representation is a more suitable choice for spectral analysis. First of all, the set of orthonormal polynomials, $\{1,\phi_1,\phi_2,\cdots \},$ may not be a basis of $L^2(\partial \mathbb D, d\mu).$ Indeed, the orthornomal polynomials form a basis if and only if $\sum_{j=0}^\infty |\alpha_j|^2=\infty$ where $\alpha_j$'s are the corresponding Verblunsky coefficients (\cite{MR2105088}, Theorem 1.5.7). Even for the case when the orthonormal polynomials form a basis, a row of its GGT representation has infinitely many nonzero terms. The five diagonal form of a CMV matrix allows us to connect the solution $u$ of $\mathcal Cu =zu$, $z\in \partial \mathbb D,$ to $2\times 2$ matrices and this provides very useful tools for spectral analysis. On the other hand, we do not have this connection for GGT representation as its rows are not finite (see \cite{MR2105088}, Sections 4.1 and 4.2 for more discussion.)

Now, let us discuss CMV matrices over dynamical systems. Let $(X,\nu)$ be a probability measure space and let $T:X\to X$ be an invertible measure preserving transformation. Under this setting, we may consider dynamically defined Verblunsky coefficients with a measurable function $f:X\to \mathbb D$. That is, our coefficients $\{\alpha_n\}_{n\in \mathbb Z_+}$ are defined by $\alpha_n=f(T^n x)$ for some $x\in X.$ As $T$ is an invertible map, we may also consider a bi-infinite sequence, $\{\alpha_n\}_{n\in \mathbb Z}=\{f(T^n x) \}_{n\in \mathbb Z}.$ This leads to a bi-infinite CMV matrix, called an extended CMV matrix:

$$
\mathcal E_x=\begin{bmatrix}
    \ddots& \ddots & \ddots &   & \\
    & -\overline {\alpha_0}\alpha_{-1} & \overline{\alpha_1}\rho_0 & \rho_1\rho_0  & & & &\\
   & -\rho_0\alpha_{-1} & -\overline{\alpha_1}\alpha_0 & -\rho_1\alpha_0 & & & & \\
    &  & \overline{\alpha_2}\rho_1 &-\overline{\alpha_2}\alpha_1 & \overline{\alpha_3}\rho_2  &\rho_3\rho_2 & & &\\
        &  & \rho_2\rho_1 &-\rho_2\alpha_1 & -\overline{\alpha_3}\alpha_2  &-\rho_3\rho_2 & & &\\
            &  & &  & \overline{\alpha_4}\rho_3 & -\overline{\alpha_4}\alpha_3& \overline{\alpha_5}\rho_4 & \\
               &  & &  & \rho_4\rho_3 &- \rho_4\alpha_3& -\overline{\alpha_5}\rho_4 & \\
 &  & &  &  & \ddots&  \ddots& \ddots\\
\end{bmatrix}
$$

where, again, $\rho_n=(1-|\alpha_n|^2)^{1/2}$. 

Extended CMV matrices are useful tools to study spectral properties. Let us now assume that $T:X\to X$ is an ergodic invertible measure preserving transformation. With the associated extended CMV matrix, we have $\sigma(\mathcal E_x)=\sigma(\mathcal E_y)$ for $\nu$-almost every $x,y\in X.$ Moreover, the almost sure spectrum is purely essential. On the other hand, with the standard CMV matrix, what we obtain is that the essential spectrum coincides for $\nu$-almost every $x\in X$ and the discrete spectrum may depend on $x\in X.$ (\cite{MR2105089}, Theorem 10.16.1 and Theorem 10.16.2.) Moreover, we may draw more conclusions from Kotani theory with the extended CMV matrix. (\cite{MR2105089}, Theorem 10.11.1 -- 10.11.4.)

The spectrum of extended CMV matrices associated to the dynamically defined Verblunsky coefficients are closely related to the Szeg{\"o} cocycle defined as
$(T,\overline A_z(x))=(T,\overline A(f(x),z)), z\in \partial \mathbb D,$ where
$$\overline A(f(x),z):=\frac 1 {z^{1/2}\sqrt{1-|f(x)|^2}}\begin{bmatrix}
    z    & -\bar f(x)   \\
    -f(x)z     & 1
\end{bmatrix}.$$

$\overline A_z(x)$ is an element of ${SU}(1,1),$ which may not be in $SL(2,\mathbb R).$ However, there is a canonical conjugacy between $ {SU}(1,1)$ and $SL(2,\mathbb R)$, which we will explain later in more detail. 

Let $X$ be a compact metric space. If $T:X\to X$ is a minimal homeomorphism and $f\in C^0(X,\mathbb D)$, there is a uniform compact set $\Sigma\subset \partial \mathbb D$ with $\sigma(\mathcal E_x)=\Sigma$ for every $x\in X.$ Damanik \emph{et al} \cite{MR3543643} show that the uniform spectrum is given by $\Sigma=\partial \mathbb D\setminus U$, where
$$U=\{z\in \partial \mathbb D: (T, \overline A_z)\text{ is uniformly hyperbolic}\}.$$
Note that under the same hypothesis, the spectrum of standard CMV matrices may depend on $x\in X.$

Our strategy is to show that, given $\epsilon>0,$ if $(T, \overline A(f,z))$ is not uniformly hyperbolic, there exists $f'\in C^0(X,\mathbb D)$ such that $||\overline A(f,z)-\overline A(f',z)||_{C^0}<\epsilon$ and $(T, \overline A(f',z))$ is uniformly hyperbolic. By combining with the result above, the following theorem holds.

\begin{thm}
Let $T:X\to X$ be a strictly ergodic homeomorphism such that $h(T(x))=h(x)+g$ for some $g\in \mathbb G$ where $h:X\to \mathbb G$ is an onto continuous map and $\mathbb G$ is an infinite compact abelian group. For a generic $f\in C^0(X,\mathbb D),$ we have that $U=\partial \mathbb D\setminus \Sigma$ is dense; that is, the associated CMV operators have a Cantor spectrum.

\end{thm}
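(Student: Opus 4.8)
The plan is to reduce the theorem to a $C^0$-density statement for uniform hyperbolicity of the Szeg\H{o} cocycles, which in turn reduces (via the canonical conjugacy $SU(1,1)\cong SL(2,\mathbb R)$) to the perturbation machinery of Avila--Bochi--Damanik \cite{MR2477761}. Concretely, fix $z\in\partial\mathbb D$ and suppose $(T,\overline A(f,z))$ is not uniformly hyperbolic. One wants $f'\in C^0(X,\mathbb D)$ with $\|\overline A(f,z)-\overline A(f',z)\|_{C^0}<\epsilon$ and $(T,\overline A(f',z))$ uniformly hyperbolic. Granting such a local density result, the passage to the theorem is a standard Baire-category argument: for each $z$ in a countable dense subset $\{z_k\}$ of $\partial\mathbb D$, the set $\mathcal U_k=\{f\in C^0(X,\mathbb D):(T,\overline A(f,z_k))\text{ is uniformly hyperbolic}\}$ is open (uniform hyperbolicity is an open condition in the $C^0$ topology of the cocycle, hence of $f$) and, by the local density result, dense; so $\bigcap_k\mathcal U_k$ is a dense $G_\delta$. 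For $f$ in this residual set, $U\supset\{z_k\}$ is dense in $\partial\mathbb D$, and since $\Sigma=\partial\mathbb D\setminus U$ is closed with empty interior and (by ergodicity/minimality, as recalled above) perfect, $\Sigma$ is a Cantor set.

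The core of the argument is therefore the single-energy density statement. Here the strategy mirrors the Schr\"odinger case in \cite{MR2477761}: first transport the problem to $SL(2,\mathbb R)$ using the Cayley-type conjugacy $Q$ with $Q\,SU(1,1)\,Q^{-1}=SL(2,\mathbb R)$, so that $(T,\overline A(f,z))$ becomes a genuine continuous $SL(2,\mathbb R)$-cocycle $(T,B_{f,z})$ whose uniform hyperbolicity is equivalent to that of the original. One then invokes the main perturbative theorem of \cite{MR2477761}: a non-uniformly-hyperbolic continuous $SL(2,\mathbb R)$-cocycle over a generalized skew-shift whose homotopy class does not present the relevant obstruction can be $C^0$-perturbed to be uniformly hyperbolic; the obstruction is ruled out, or circumvented, exactly as in the Schr\"odinger setting because the base dynamics here is the same (strictly ergodic, fibered over an infinite compact abelian group). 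Unlike the Jacobi case, where the author notes the projection lemma applies directly, for CMV one cannot simply realize the perturbed $SL(2,\mathbb R)$-cocycle as $B_{f',z}$ for some admissible $f'$: the constraint that the perturbation stay within the parametrized family $\{\overline A(f',z):f'\in C^0(X,\mathbb D)\}$ is a genuine restriction. So the proof must instead \emph{construct} $f'$ by hand, carefully following which entries of $\overline A(f,z)$ move under the abstract perturbation and choosing $f'$ so that $\overline A(f',z)$ tracks that motion while remaining of Szeg\H{o} form with values in $\mathbb D$.

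The main obstacle, then, is precisely this last point: showing that the abstract $C^0$-perturbation supplied by \cite{MR2477761} can be taken within (or replaced by one within) the Szeg\H{o} family. The Szeg\H{o} matrix $\overline A(f,z)$ depends on the single complex parameter $f(x)\in\mathbb D$ (with $z$ fixed), so the family is, pointwise, only a real two-parameter slice of the three-dimensional group $SU(1,1)$; one must check that this slice is ``large enough'' in the directions that matter for creating a uniformly hyperbolic cocycle — i.e., that one can still push $\|\overline A^n(f',z)\|$ to grow exponentially by a $C^0$-small, Szeg\H{o}-admissible change of $f$. I expect the resolution to be constructive and local: use minimality of $T$ to find a long, well-distributed orbit segment, and on a small neighborhood of that segment nudge $f$ so that the product of Szeg\H{o} matrices along the segment becomes hyperbolic (exploiting that one can freely adjust the argument of $\overline{f(x)}$, i.e., rotate, within $SU(1,1)$), then propagate uniform hyperbolicity to all of $X$ by an open-density/continuity argument. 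Verifying that the values stay in $\mathbb D$ and that $\rho_n=(1-|f(x)|^2)^{1/2}$ behaves well under the perturbation is routine once the construction is set up, but identifying the correct Szeg\H{o}-admissible direction of perturbation is the crux.
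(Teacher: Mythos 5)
Your reduction to a single-$z$ density statement and the concluding Baire argument (openness of $UH_z$, intersection over a countable dense set $\{z_k\}$, then invoking the characterization $\Sigma=\partial\mathbb D\setminus U$ from \cite{MR3543643}) match the paper. But the heart of the theorem is exactly the step you leave as a sketch: producing a uniformly hyperbolic perturbation \emph{inside} the Szeg\H{o} family. Your proposed resolution --- nudge $f$ near a long, well-distributed orbit segment so that the finite product of Szeg\H{o} matrices along that segment becomes hyperbolic, then ``propagate uniform hyperbolicity to all of $X$ by an open-density/continuity argument'' --- is not an argument: hyperbolicity of one long finite product does not imply the uniform norm growth $\|A^n(x)\|\ge c\sigma^n$ for \emph{all} $x$ and $n$, and no mechanism for the claimed propagation is given. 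This is precisely the gap the paper closes, and it does so differently: it first uses the density of uniform hyperbolicity in \emph{Ruth} together with Lemma 10 of \cite{MR2477761} to obtain a uniformly hyperbolic $B\in C^0_{A,\overline V}(X,SL(2,\mathbb R))$, i.e.\ a perturbation supported on a small closed set $\overline V$ on which $|f|$ is bounded away from $0$ and $1$ (this also forces a preliminary perturbation when $f\equiv 0$, a case you do not address; when $f(x)=0$ the Szeg\H{o} matrix is a rigid rotation with no adjustable part, so the two-parameter slice is genuinely too small there). Then, instead of trying to track the abstract perturbation entrywise, it replaces $B$ on $\overline V$ by a matrix $B''$ of Szeg\H{o} form chosen so that $B''(x)u(x)=B(x)u(x)$ \emph{exactly}, where $u$ is the continuous unstable section of $(T,B)$; the normalizing functions $h_\epsilon,g_\epsilon$ and Dini-type uniform convergence lemmas make this choice continuous and $C^0$-small. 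Uniform hyperbolicity of $(T,B'')$ then follows at once from $\|(B'')^n(x)\|\ge\|(B'')^n(x)u(x)\|=\|B^n(x)u(x)\|\ge C^{-1}\lambda^{-n}$, with no need to re-establish an invariant splitting.

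So while your high-level framing (conjugate $SU(1,1)$ to $SL(2,\mathbb R)$, rule out the homotopy obstruction, identify the Szeg\H{o}-admissibility constraint as the crux) is accurate, the proposal is missing the key idea that makes the construction work: matching the candidate Szeg\H{o}-form cocycle with the abstract uniformly hyperbolic one along its unstable section, so that exponential norm growth is inherited. One smaller inaccuracy: the obstruction in \cite{MR2477761} is removed here not ``because the base dynamics is the same'' but because the Szeg\H{o} cocycle, written in the form of Lemma \ref{A}, is homotopic to a constant rotation (contract the radial parameter to $0$), hence lies in \emph{Ruth}.
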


\subsection{Jacobi matrices}

Let $\mu$ be a nontrivial probability measure on $\mathbb R$ (not supported on a finite set) with a compact support. Then we may define the $n$-th monic orthogonal polynomial $P_n(x)$ by $P_n\perp x^l$ for all $l=0,1,\cdots , n-1.$ Naturally, the $n$-th orthonormal polynomial is given as $p_n:=P^n/||P^n||.$ It is well known that the orthonormal polynomials obey the Jacobi recursion,
$$xp_n(x) = a_{n+1}p_{n+1}(x) + b_{n+1}p_n(x) + a_np_{n-1}(x)$$
with suitably chosen real valued sequences $a_n>0$ and $b_n,$ called Jacobi coefficients. Conversely, given real valued bounded sequences $\{a_n\}$ and $\{b_n\}$ with $a_n>0$ for all $n\in \mathbb Z_+,$ the Jacobi recursion gives us a set of orthonormal polynomials with respect to a nontrivial probability measure with a compact support.

The Jacobi matrix associated to the measure $\mu$ is the matrix representation for multiplication by $x$ in $L^2( \mathbb  R,d\mu)$ with respect to the basis $\{p_0,p_1,p_2,\cdots\}.$

$$
\mathcal J=\begin{bmatrix}
    & b_0 & a_0& & &  \\
   & a_0 & b_1 & a_1 & &   \\
     &  &a_1 &b_2 & a_2  & &  \\
       
  & &  & \ddots&  \ddots& \ddots\\
\end{bmatrix}
$$

Let $(X,\nu)$ be a probability measure space. Let $f_a,f_b:X\to \mathbb R$ be measurable maps with $f_a(x)>0$ for all $x\in X$ and let $T:X\to X$ be an invertible ergodic transformation. 

As for the case of Verblunsky coefficients, we may consider dynamically defined Jacobi coefficients under this setting. Specifically, two-sided Jacobi coefficients $\{a_n\}_{n\in \mathbb Z}$ and $\{b_n\}_{n\in \mathbb Z}$ are defined by $a_n=f_a(T^n(x))$ and $b_n=f_b(T^nx)$, respectively. Then, an associated bi-infinite Jacobi matrix naturally arises. As in the case of CMV matrices, there are many advantages of bi-infinite Jacobi matrices to study spectral properties. Given $x\in X,$ the bi-infinite Jacobi matrix $H_x$ is given by

$$
H_x=\begin{bmatrix}
    &\ddots&  \ddots& \ddots & & &   &\\
& & a_{-2}& b_{-1} & a_{-1}& & &  \\
 & & &  a_{-1}& b_0 & a_0& & &  \\
   & & & & a_0 & b_1 & a_1 & &   \\
     & & & &  &a_1 &b_2 & a_2  & &  \\     
   & & & & &   & \ddots&  \ddots& \ddots
\end{bmatrix}
$$

It is well known that the spectrum of the Jacobi matrix is closely related to the solutions of the difference equation, $(H_x-E)u=0$ where $E\in \mathbb R.$ Notice that a sequence $\{u_n\}$ is a solution of $(H_x-E)u=0$ if and only if
$$a_nu_{n+1}+(b_n-E)u_n+a_{n-1}u_{n-1}=0$$
for all $n\in \mathbb Z.$

Equivalently, $\{u_n\}$ obeys
$$\begin{bmatrix}
    u_{n}    \\
    a_{n-1}u_{n-1}     
\end{bmatrix}=
A^n_{E,a,b} (x)
\begin{bmatrix}
    u_{0}    \\
    a_{-1}u_{-1}     
\end{bmatrix}
$$
where
$$A_{E,a,b} (x)=\frac 1 {f_a(x)}\begin{bmatrix}
    E-f_b(x)    & -1  \\
    f_a(x)^2     & 0
\end{bmatrix}
$$

Let $X$ be a compact metric space. If we assume that $f_a, f_b\in C^0(X,\mathbb R)$ and $T:X\to X$ is a minimal homeomorphism, $\sigma(H_x)$ coincides for all $x\in X. $

 Let $\Sigma$ be the spectrum of $H_x.$
Marx \cite{MR3291142} shows that  
$$\mathbb R\setminus \Sigma=\{E\in \mathbb R | (T,A_{E,a,b}) \text{ is uniformly hyperbolic}\}.$$

In fact, Marx \cite{MR3291142} considers both singular and non-singular cocycles. For singular cocycles, uniform hyperbolicity is not applicable and, thus, ``a dominated splitting'' is introduced in \cite{MR3291142}. For $SL(2,\mathbb R)$-cocycles, dominated splitting is equivalent to uniform hyperbolicty. (See \cite{MR3291142}.)

Later, given $\epsilon>0,$ we will prove that if $(T,A_{E,a,b})$ is not uniformly hyperbolic, there exists $f_{b'}\in C^0(X,\mathbb R)$ such that $||A_{E,a,b}-A_{E,a,b'}||_{C^0}<\epsilon$ and $(T,A_{E,a,b'})$ is uniformly hyperbolic where $b'_n=f_{b'}(T^nx).$ Together with the result in \cite{MR3291142}, this implies the following theorem. 

\begin{thm}
Let $T:X\to X$ be a strictly ergodic homeomorphism such that $h(T(x))=h(x)+g$ for some $g\in \mathbb G$ where $h:X\to \mathbb G$ is an onto continuous map and $\mathbb G$ is an infinite compact abelian group. Let $f_a\in C^0(X,\mathbb R)$ with $f_a(x)>0$ for all $x\in X$. For generic $f_b\in C^0(X,\mathbb R),$ we have that $\mathbb R\setminus \Sigma$ is dense; that is, the associated Jacobi matrices have Cantor spectrum.
\label{Jacobi_thm}
\end{thm}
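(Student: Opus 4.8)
\emph{Proof plan.} The plan is to deduce the theorem from Marx's characterization of $\Sigma$, the perturbation results of \cite{MR2477761}, and a Baire category argument. By \cite{MR3291142}, $E\in\mathbb R\setminus\Sigma$ precisely when $(T,A_{E,a,b})$ is uniformly hyperbolic; uniform hyperbolicity is $C^0$-open (this follows from the criterion $\|A^n(x)\|>c\sigma^n$ recalled above, cf.\ \cite{MR3289050}), and $f_b\mapsto A_{E,a,b}$ is continuous from $C^0(X,\mathbb R)$ to $C^0(X,SL(2,\mathbb R))$, so
\[
\mathcal U_E:=\{\, f_b\in C^0(X,\mathbb R) : (T,A_{E,a,b})\text{ is uniformly hyperbolic} \,\}
\]
is open. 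If each $\mathcal U_E$ is dense, then for a countable dense $D\subset\mathbb R$ the intersection $\bigcap_{E\in D}\mathcal U_E$ is residual in the complete metric space $C^0(X,\mathbb R)$, and any $f_b$ in it has $D\subset\mathbb R\setminus\Sigma$, so the closed set $\Sigma$ has dense complement; since $\Sigma$ is also compact and, as is standard for strictly ergodic operators, perfect, it is a Cantor set. Thus it suffices to prove the density statement: if $(T,A_{E,a,b})$ is not uniformly hyperbolic and $\varepsilon>0$, there is $f_{b'}\in C^0(X,\mathbb R)$ with $\|f_b-f_{b'}\|_\infty<\varepsilon$ and $(T,A_{E,a,b'})$ uniformly hyperbolic.

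The key observation is that modifying $f_b$ produces exactly the constrained perturbation family used in \cite{MR2477761} for Schr\"odinger cocycles. For any $f_{b'}\in C^0(X,\mathbb R)$,
\[
A_{E,a,b'}(x)=\begin{bmatrix} 1 & \psi(x) \\ 0 & 1 \end{bmatrix} A_{E,a,b}(x), \qquad \psi(x)=\frac{f_b(x)-f_{b'}(x)}{f_a(x)^2},
\]
and since $f_a$ is continuous and bounded away from $0$ on the compact space $X$, the correspondence $f_{b'}\leftrightarrow\psi$ is a bi-Lipschitz bijection of $C^0(X,\mathbb R)$, so $\psi$ is small iff $f_{b'}$ is close to $f_b$. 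Hence perturbing $f_b$ amounts to left-multiplying $A_{E,a,b}$ by an arbitrary small continuous map from $X$ into the unipotent subgroup fixing $[e_1]\in\mathbb{RP}^1$ — the same restricted family as in \cite{MR2477761} (where $f_a\equiv1$); the sign constraint $a>0$ plays no role since $b$ is unconstrained. Moreover $x\mapsto A_{E,a,b}(x)$ is homotopic to a constant in $C^0(X,SL(2,\mathbb R))$: deforming $f_a$ to $(1-t)f_a+t$ and $f_b$ to $(1-t)f_b+tE$ gives a homotopy ending at the constant $\left[\begin{smallmatrix} 0 & -1 \\ 1 & 0 \end{smallmatrix}\right]\in SO(2,\mathbb R)$, which is homotopic to the identity. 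Therefore the homotopy class of $A_{E,a,b}$ does not display the obstruction of \cite{MR2477761}, and the perturbation theorem there provides a continuous $\hat A:X\to SL(2,\mathbb R)$, arbitrarily $C^0$-close to $A_{E,a,b}$, with $(T,\hat A)$ uniformly hyperbolic.

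Finally we invoke the projection lemma of \cite{MR2477761} to replace the unconstrained $\hat A$ by an admissible perturbation. Let $\hat u:X\to\mathbb{RP}^1$ be the continuous unstable direction of $(T,\hat A)$. One looks for a continuous, small $\psi:X\to\mathbb R$ making $\hat u$ (or a $C^0$-nearby field) invariant and expanding for $x\mapsto\left[\begin{smallmatrix}1 & \psi(x) \\ 0 & 1\end{smallmatrix}\right]A_{E,a,b}(x)$; by the criterion $\|A^n(x)\|>c\sigma^n$ this already forces uniform hyperbolicity. Writing $w(x):=A_{E,a,b}(x)\hat u(x)$, invariance is the pointwise system $\left[\begin{smallmatrix}1 & \psi(x) \\ 0 & 1\end{smallmatrix}\right]w(x)=\hat u(Tx)$, i.e.\ $\psi(x)=s(\hat u(Tx))-s(w(x))$ in slope coordinates, which is continuously solvable with $\psi$ small provided $w(x)$ and $\hat u(Tx)$ stay uniformly away from the fixed direction $[e_1]$; after replacing the cocycle by a suitable constant conjugate as in \cite{MR2477761}, this can be arranged because $\hat A$ is $C^0$-close to $A_{E,a,b}$. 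Setting $f_{b'}:=f_b-f_a^2\psi$ then gives $(T,A_{E,a,b'})$ uniformly hyperbolic with $\|f_b-f_{b'}\|_\infty<\varepsilon$, which is the density statement. The delicate step, and the main obstacle, is precisely this solvability — equivalently, verifying the hypotheses of the projection lemma — and it has no evident counterpart for CMV matrices, which is why the CMV case requires the separate constructive argument given later.
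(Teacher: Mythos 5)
Your overall skeleton (openness of $UH_E$ via the norm-growth criterion, a Baire argument over a countable dense set of energies, and the observation that $A_{E,a,b}$ is homotopic to a constant and hence lies in \emph{Ruth}, so the density theorem of \cite{MR2477761} produces a uniformly hyperbolic $\hat A$ that is $C^0$-close) coincides with the paper's. The gap is in the step you describe last, and it is exactly the step the paper spends Lemma \ref{Jacobi_lem} and Proposition \ref{HK_prop1} on. First, the projection lemma of \cite{MR2477761} is stated and proved for Schr\"odinger cocycles; it does not apply verbatim here, because the admissible class is $C^0(X,J)$ \emph{with the prescribed $f_a$}, i.e.\ only the $b$-entries may move. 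The paper therefore proves its own projection statement: using Lemma \ref{ABD_lemma10} to localize the perturbation to a small set, it solves explicitly, over the block $T^{-1}(K),K,T(K)$, for new $b$-values keeping the $a$-values fixed so that the three-fold product is unchanged; this makes $\Phi(B)$ exactly conjugate to $B$, so uniform hyperbolicity transfers with no estimates. The solvability needs $\operatorname{tr}A(x)\neq 0$ on $K$ (i.e.\ $f_b\neq E$ there), and the degenerate case $f_b\equiv E$ is handled by a preliminary perturbation of $f_b$; your proposal addresses neither the adaptation nor this degeneracy, and simply citing the projection lemma does not cover it.

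Second, the substitute mechanism you sketch (choose $\psi(x)$ pointwise so that the unstable field $\hat u$ of $\hat A$ becomes invariant for $\bigl[\begin{smallmatrix}1&\psi\\0&1\end{smallmatrix}\bigr]A_{E,a,b}$) does not close as stated. (i) The slope equation $\psi(x)=s(\hat u(Tx))-s(w(x))$ degenerates when $\hat u(Tx)$, or $w(x)=A_{E,a,b}(x)\hat u(x)$ (equivalently $\hat u(x)$ near $[e_2]$), approaches $[e_1]$, and nothing forces the unstable field of the unconstrained $\hat A$ to avoid these directions; the proposed remedy of ``replacing the cocycle by a suitable constant conjugate'' is unavailable, because the admissible perturbations coming from varying $f_b$ are precisely the unipotents fixing $[e_1]$ and this family is not preserved under conjugation, so the bad direction moves with you. (ii) Projective invariance alone does not yield the criterion $\|A^n(x)\|>c\sigma^n$: you still need uniform expansion along the matched field, and the natural comparison with $\hat A$ requires the perturbation size (hence $\|\hat A-A_{E,a,b}\|$ and the distance of $\hat u$ from $[e_1]$) to be small relative to the hyperbolicity constants $C,\lambda$ of $\hat A$ --- quantities your construction does not control, and which typically degenerate as $\hat A$ is taken closer to the non-uniformly-hyperbolic $A_{E,a,b}$. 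The paper avoids both issues structurally: in the Jacobi case $\Phi(B)$ is conjugate to $B$, and in the CMV case the modified cocycle matches $B$ on the unstable direction as vectors, so growth is inherited exactly. To repair your argument you would essentially have to reproduce the localized block-solving of Lemma \ref{Jacobi_lem}.
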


\subsection{Discussion of the results}

In addition to the results for general continuous $SL(2,\mathbb R)$-cocycles, the $C^0$-genericity of Cantor spectrum for Schr{\" o}dinger operators proved in Avila \emph{et al} \cite{MR2477761} was striking. Especially for the standard skew-shift with a sufficiently regular nonconstant potential function $V:\mathbb T^2\to \mathbb R$, it had been widely expected to have pure point spectrum, which is not a Cantor set, with exponentially decaying eigenfunctions.

After the work, an obvious expectation for CMV and Jacobi matrices generated by the same base dynamics is to also have generic Cantor spectrum. The present paper, to the best of our knowledge, first provides a proof for the statement.

We would like to mention that the spectrum of quasiperiodic Schr{\"o}dinger operators with analytic potentials behaves in a very different way. For the case of shifts on the one-dimensional torus, Goldstein and Schlag \cite{MR2753606} proved that Cantor spectrum is obtained for analytic potentials in the regime of positive Lyapunov exponents with typical shifts, i.e., with $x\in \mathbb T$ and $n\in \mathbb Z,$ shifts $x+n\alpha$ for Lebesgue almost every $\alpha\in \mathbb T$. For the case of shifts on a multidimensional torus, it turned out to be harder to study and, thus, is much less understood. However, for a two-dimensional shift, Goldstein \emph{et al} \cite{MR3987178} show that the spectrum consists of a single interval for large real analytic potentials satisfying certain restrictions.

\section{Results for $SL(2,\mathbb R)$-cocycles}

As noted in the introduction, our work is closely related to the results in Avila \emph{et al}  \cite{MR2477761}. In this section, we discuss the results in \cite{MR2477761} for general continuous $SL(2,\mathbb R)$-cocycles over the same base dynamics as in the present paper, i.e., with a strictly ergodic homeomorphism $T:X\to X$ such that $h(T(x))=h(x)+g$ for some $g\in \mathbb G$ where $h:X\to \mathbb G$ is an onto continuous map and $\mathbb G$ is an infinite compact abelian group. %which fibers over an almost periodic dynamical system.

We say that two cocycles $(T,A)$ and $(T,\tilde A)$ are conjugate (respectively, $PSL(2,\mathbb R)$- conjugate) if there exists a conjugacy $B\in C^0(X,SL(2,\mathbb R))$  
 $(\text{respectively, } B\in C^0(X, PSL(2, \mathbb R)))$ such that $ \tilde A(x)=B(T(x))A(x)B(x)^{-1}.$

We say $(T,A)$ is reducible if it is $PSL(2,\mathbb R)-$conjugate to a constant cocycle. We say $(T,A)$ is reducible up to homotopy if there exists a reducible cocycle $(T, \tilde A)$ such that the maps $A$ and $\tilde A:X\to SL(2,\mathbb R)$ are homotopic. Let \emph{Ruth} be the set of all $A$ such that $(T,A)$ is reducible up to homotopy.

In Avila \emph{et al}  \cite{MR2477761}, the authors show that if an $SL(2,\mathbb R)$-cocycle is not uniformly hyperbolic, it can be approximated by one that is conjugate to an $SO(2,\mathbb R)$-cocycle. Using this, it is proved that if a cocycle is in \emph{Ruth,} then it can be approximated by a uniformly hyperbolic cocycle. As a uniformly hyperbolic cocycle is always reducible up to homotopy, this shows that uniform hyperbolicity is dense in \emph{Ruth}. (\cite{MR2477761}, Theorem 2.)
 
 We will observe that a cocycle associated to a CMV matrix or a Jacobi matrix is homotopic to a constant cocycle, hence in \emph{Ruth.} Therefore, it can be $C^0$-perturbed so that it is a continuous $SL(2,\mathbb R)$-cocycle, which is uniformly hyperbolic. For the purpose of our work, a difficulty is that the perturbed cocycle need not be in the form of one associated with CMV matrices or Jacobi matrices. 
 
 The difficulty is nicely overcome for the case of Schr{\"o}dinger operators in Avila \emph{et al}  \cite{MR2477761} by using ``a projection lemma'', which is also proved in their work. On the one hand, it makes the perturbed $SL(2,\mathbb R)$-cocycle conjugate to a cocycle associated to Schr{\"o}dinger operators (one may say the perturbed $SL(2,\mathbb R)$-cocycle is projected). Of course, the conjugacy preserves the uniform hyperbolicity.  On the other hand, the associated cocyle can be arbitrarily close to the original cocycle, which is not uniformly hyperbolic. In conclusion, it provides a uniformly hyperbolic cocyle associated to Schr{\"o}dinger operators such that it is arbitrarily close to the original cocycle, which was not uniformly hyperbolic.  
 
 Our proof for the case of Jacobi matrices is a direct application of the above procedure. We were unable to find a possible way of application for the case of CMV matrices and we will explain in more detail this difficulty during the proof for the case of Jacobi matrices. However, we still use half of the projection lemma in \cite{MR2477761}. We first need to introduce some notation. Given $A\in C^0(X,SL(2,\mathbb R))$ and a nonempty subset $V\subset X,$ let $C^0_{A,V}(X,SL(2,\mathbb R))\subset C^0(X,SL(2,\mathbb R))$ be the set of all $B\in C^0(X,SL(2,\mathbb R))$ such that $B(x)=A(x)$ for $x\notin V.$

\begin{lem}(\cite{MR2477761}, Lemma 10)

Let $V\subset X$ be any nonempty open set, and let $A\in C^0(X, SL(2,\mathbb R)$ be arbitrary. Then there exist an open neighborhood $\mathcal W_{A,V}\subset C^0(X,SL(2,\mathbb R))$ of $A$ and continuous maps 
$$\Phi=\Phi_{A,V}:\mathcal W_{A,V}\to C^0_{A,\overline V}(X,SL(2,\mathbb R))$$
and 
$$\Psi=\Psi_{A,V}:\mathcal W_{A,V}\to C^0(X,SL(2,\mathbb R))$$
satisfying 
$$\Psi(B)(T(x))\cdot B(x)\cdot [\Psi(B)(x)]^{-1}=\Phi(B)(x),$$
$$\Phi(A)=A\text{ and } \Psi(A)=id.$$
\label{ABD_lemma10}
\end{lem}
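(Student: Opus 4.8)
This is Lemma~10 of \cite{MR2477761}; for completeness I sketch the argument, which uses only the minimality of $T$ (the skew-shift structure plays no role here). The idea is to absorb a small perturbation $B$ of $A$ entirely into the closed region $\overline V$ by a conjugacy $\Psi(B)$ that propagates the identity along backward $T$-orbits until they first meet a small open set, with a continuous cutoff near the boundary of that set to guarantee continuity. First I would fix the relevant data: since $T$ is minimal (equivalently, $T^{-1}$ is minimal) and $X$ is compact, for any nonempty open $V'$ there is $N\geq 1$ with $X=\bigcup_{j=0}^{N}T^{j}(V')$. I choose $V'$ to be a nonempty open set with $\overline{V'}\subset T(V)$, and fix, by Urysohn's lemma, a continuous $\eta:X\to[0,1]$ with $\eta\equiv 0$ on $\overline{V'}$ and $\eta\equiv 1$ on $X\setminus T(V)$.

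Next, for $B$ in a sufficiently small $C^0$-neighborhood $\mathcal W_{A,V}$ of $A$, I would define $\Psi(B)$ by the recursion
$$\Psi(B)(x)=\bigl(A(T^{-1}x)\,\Psi(B)(T^{-1}x)\,B(T^{-1}x)^{-1}\bigr)^{\eta(x)},$$
where $M^{t}:=\exp(t\log M)$ is the fractional power, defined for $M$ in a fixed neighborhood of the identity (so that $\log M$ is a trace-zero matrix near $0$ and $M^{t}\in SL(2,\mathbb R)$). Reading this as the $N$-th iterate of the obvious operator applied to the constant map $id$, it is well posed: the recursion stops after at most $N$ backward steps, because within that many iterates it reaches $\{\eta=0\}\supseteq\overline{V'}$, where the exponent $0$ kills all dependence on earlier terms; shrinking $\mathcal W_{A,V}$ keeps every intermediate matrix inside the domain of $\log$. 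One checks that $\Psi(B)$ is continuous in $x$ — here the cutoff $\eta$ is crucial, since it makes the formula degrade continuously to the value $id$ as $x$ approaches $\overline{V'}$, so nothing jumps where the recursion depth changes — and jointly continuous in $B$; moreover $\Psi(A)=id$, since $B=A$ makes every intermediate matrix equal to $id$. I then set $\Phi(B)(x):=\Psi(B)(T(x))\,B(x)\,\Psi(B)(x)^{-1}$, which is automatically continuous in $x$ and $B$ and satisfies $\Phi(A)=A$.

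It remains to verify $\Phi(B)\in C^0_{A,\overline V}$. Substituting $x\mapsto T(x)$ in the recursion gives $\Psi(B)(T(x))=\bigl(A(x)\,\Psi(B)(x)\,B(x)^{-1}\bigr)^{\eta(T(x))}$, so
$$\Phi(B)(x)=\bigl(A(x)\,\Psi(B)(x)\,B(x)^{-1}\bigr)^{\eta(T(x))}\,B(x)\,\Psi(B)(x)^{-1};$$
when $\eta(T(x))=1$ this collapses to $A(x)$, and $\eta(T(x))\neq 1$ forces $T(x)\in T(V)$, i.e.\ $x\in V\subseteq\overline V$. Hence $\Phi(B)$ coincides with $A$ off $\overline V$, as required. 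The one step that genuinely needs care is the continuity of $x\mapsto\Psi(B)(x)$: the recursion depth is bounded by $N$ but not locally constant, so one must show the value does not jump where the depth changes — which is precisely what the cutoff $\eta$ (forcing the correction to equal the identity on a neighborhood of $\overline{V'}$) is engineered to prevent. Everything else — well-definedness, the identities $\Phi(A)=A$ and $\Psi(A)=id$, and continuity in $B$ — is a routine consequence of the construction being built from $B$ by finitely many continuous operations.
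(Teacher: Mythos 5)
The paper does not actually prove this statement: it is imported verbatim as Lemma~10 of \cite{MR2477761}, so there is no in-paper argument to compare against. Your sketch is a correct reconstruction of the standard absorption argument behind that lemma (and in the same spirit as the original): minimality plus compactness give a uniform $N$ with $X=\bigcup_{j=0}^{N}T^{j}(V')$, a Urysohn cutoff $\eta$ supported correctly relative to $T(V)$ makes the cohomological recursion $\Psi(B)(T(x))=A(x)\Psi(B)(x)B(x)^{-1}$ hold wherever $\eta(T(x))=1$, and that immediately yields $\Phi(B)=A$ off $V\subseteq\overline V$; your bookkeeping with $\overline{V'}\subset T(V)$ versus $X\setminus T(V)$ is right. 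Two points deserve to be made explicit to close the argument. First, the well-posedness claim is really a stabilization statement: defining $\Psi_m(B)$ as the $m$-th iterate of your operator applied to the constant map $id$, one shows by induction on the first backward hitting time of $\{\eta=0\}$ that $\Psi_m(B)(x)$ is constant for $m\geq N+1$, so $\Psi(B):=\Psi_{N+1}(B)$ is an honest fixed point of the recursion at \emph{every} $x$ — this is what licenses the substitution $x\mapsto T(x)$ used to verify $\Phi(B)\in C^0_{A,\overline V}$ (your ``$N$-th iterate'' is off by one, harmlessly). Second, the assertion that shrinking $\mathcal W_{A,V}$ keeps every intermediate matrix in the domain of $\log$ needs the finite induction over these at most $N+1$ steps: at $B=A$ all intermediate matrices equal $id$, each step is a continuous operation, and compactness of $X$ gives uniformity, so a sufficiently small $C^0$-ball around $A$ works; together with $\operatorname{tr}\log M=0$ (so $M^{t}\in SL(2,\mathbb R)$), which you noted, this also makes continuity in $x$ and joint continuity in $B$ automatic, rather than a delicate point about variable recursion depth. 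With those routine details filled in, your proof is complete and matches the intended mechanism of the cited lemma.
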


\section{Proof of results}

\subsection{Proof for CMV matrices}

Let $J=\begin{bmatrix}
    1    & 0  \\
   0   & -1
\end{bmatrix}.$ Recall $ U (1,1)$ is the group of all $2\times 2$ matrices obeying
$$A^* J A=J$$
and ${SU}(1,1):=\{A\in U(1,1):\det A=1\}.$

\begin{comment}
 as 
$$\overline A(\alpha(x),z):=\frac 1 {z^{-1/2}\sqrt{1-|\alpha(x)|^2}}\begin{bmatrix}
    z    & -\bar\alpha(x)   \\
    -\alpha(x)z     & 1
\end{bmatrix}$$

 Following Theorem is due to Damanik, Fillman, Lukic, Yessen (2016):

\begin{thm}
Given $\alpha\in C^0(X,\mathbb D),$ let $\mathcal E_x$ be the CMV matrix defined by the two-sided sequence $\{\alpha_n\}$ where $\alpha_n=\alpha(f^n(x)).$ There is a uniform compact set $\Sigma\subseteq \partial \mathbb D$ with $\sigma(\mathcal E_x)=\Sigma$ for every $x\in X.$ Moreover, this uniform spectrum is characterized as $\Sigma=\partial \mathbb D\setminus U,$ where 
$$U=\{z\in \partial \mathbb D: (f,z^{1/2}\overline A ( \alpha ,z))\text{ is uniformly hyperbolic }\}.$$
\label{DFLY_thm1}
\end{thm}

Of course, $(f,z^{1/2}\overline A ( \alpha ,z))$ is uniformly hyperbolic if and only if $(f,\overline A ( \alpha ,z))$ is.

\end{comment}

Let $\overline J$ be a matrix such that $\overline J^*=\overline J=\overline J^{-1}$ and $Tr(\overline J)=0.$ Then, we may choose a unitary matrix $W$ such that $W\overline J W^{-1}=J.$ Thus, if we define ${SU}(1,1;\overline J):=\{A: A^*\overline J A=\overline J\}$, we have
$$W {SU}(1,1;\overline J)W^{-1}={SU}(1,1).$$

We introduce an important result here:

\begin{lem} (\cite{MR2105089}, Proposition 10.4.1)

With $J_r:=\begin{bmatrix}
    0    & i   \\
    -i     & 0
\end{bmatrix},$ we have
$$ {SU}(1,1, J_r)=SL(2,\mathbb R).$$
\end{lem}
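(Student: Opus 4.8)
The plan is to unwind the definitions and reduce the claim to the elementary identity $A^{T}\Omega A=(\det A)\,\Omega$, valid for every $2\times 2$ matrix $A$, where $\Omega:=\begin{bmatrix}0&1\\-1&0\end{bmatrix}$. First I would fix the convention: consistent with the ``$S$'', one has $SU(1,1;J_{r})=\{A\in SL(2,\mathbb C):A^{*}J_{r}A=J_{r}\}$. (Note that the relation $A^{*}J_{r}A=J_{r}$ on its own forces only $|\det A|=1$, obtained by taking determinants and using $\det J_{r}=-1$; so the normalization $\det A=1$ is genuinely needed, since otherwise one would pick up a spurious $U(1)$-scalar and the asserted equality would fail. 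This is the single point in the argument requiring care.)

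Next I would observe that $J_{r}=i\Omega$, with $\Omega$ real and skew-symmetric, so that $A^{*}J_{r}A=J_{r}$ is equivalent to $A^{*}\Omega A=\Omega$. On the other hand, $\det A=1$ together with the universal identity above gives $A^{T}\Omega A=\Omega$. Subtracting the two relations yields $(A^{*}-A^{T})\Omega A=0$, and since $\Omega$ and $A$ are invertible this forces $A^{*}=A^{T}$, i.e. $\overline{A}=A$. Hence $A$ is a real matrix with $\det A=1$, so $A\in SL(2,\mathbb R)$; this gives $SU(1,1;J_{r})\subseteq SL(2,\mathbb R)$.

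For the reverse inclusion I would run the computation backwards: if $A\in SL(2,\mathbb R)$ then $A^{*}=A^{T}$ and $\det A=1$, so $A^{*}\Omega A=A^{T}\Omega A=\Omega$, hence $A^{*}J_{r}A=J_{r}$; together with $\det A=1$ this says $A\in SU(1,1;J_{r})$.

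I do not expect a real obstacle here: the whole argument is a two-line matrix manipulation, and the only thing to be vigilant about is the determinant normalization discussed above (equivalently, matching the convention of \cite{MR2105089}, Proposition 10.4.1, where ``$SU$'' is understood to include $\det=1$). A more pedestrian alternative — writing $A=\begin{bmatrix}a&b\\c&d\end{bmatrix}$, extracting from $A^{*}J_{r}A=J_{r}$ the scalar conditions $\bar a c\in\mathbb R$, $\bar b d\in\mathbb R$, $\bar a d-\bar c b=1$, and combining them with $ad-bc=1$ to force $a,b,c,d\in\mathbb R$ — also works but is messier, so I would prefer the symplectic-identity route.
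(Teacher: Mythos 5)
Your proof is correct. Note, though, that the paper itself offers no argument for this lemma at all: it is quoted verbatim from Simon (OPUC Part 2, Proposition 10.4.1), so there is no internal proof to compare against, and your write-up simply supplies a self-contained verification of the cited fact. The route you take is the natural one: with $J_r=i\Omega$, $\Omega=\begin{bmatrix}0&1\\-1&0\end{bmatrix}$, the relation $A^*J_rA=J_r$ is the statement $A^*\Omega A=\Omega$, and comparing with the universal $2\times2$ identity $A^T\Omega A=(\det A)\,\Omega$ under the normalization $\det A=1$ forces $A^*=A^T$, i.e.\ $A$ real; the converse is the same computation read backwards. Your insistence on the determinant normalization is also well placed, and not merely a matter of taste: the paper's displayed definition ${SU}(1,1;\overline J)=\{A:A^*\overline JA=\overline J\}$ literally omits $\det A=1$, and without it the asserted equality fails (e.g.\ $A=e^{i\theta}\mathrm{Id}$ satisfies $A^*J_rA=J_r$ for every $\theta$ but is real only for $\theta\in\{0,\pi\}$); the condition is clearly intended, since the paper elsewhere sets ${SU}(1,1)=\{A\in U(1,1):\det A=1\}$ and uses $W\,{SU}(1,1;\overline J)\,W^{-1}={SU}(1,1)$. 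So your argument is a perfectly good (and arguably cleaner than the brute-force entrywise check) replacement for the external citation.
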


For our purpose, this may be read as
$$W^{-1} {SU}(1,1)W= {SU}(1,1, J_r)=SL(2,\mathbb R)$$
where $W=\frac{1}{\sqrt 2}\begin{bmatrix}
    1    & i   \\
    1     & -i
\end{bmatrix}.$
Let $f\in C^0(X,\mathbb D)$ and let $z\in \partial \mathbb D$ be given. Note that $\overline A(f,z)\in C^0(X,{SU}(1,1)).$ Thus, given $x\in X,$ we have an $SL(2,\mathbb R)$ matrix,

\begin{align*}
W^{-1}&\overline A(f(x),z)W\\
&=\frac{1}{2z^{1/2} \sqrt{1-|f(x)|^2} }
\begin{bmatrix}
    {z-\bar f(x)-f(x)z+1}   & {i(z+\bar f(x)- f(x)z-1)}   \\
    {i(-z+\bar f (x)-f(x)z+1)}       & {z+\bar f(x)+ f(x)z+1}
\end{bmatrix}.
\end{align*}

Let $z=e^{i\psi}$ and let $f(x)=r(x)e^{i\phi(x)}.$ Then, we have

\begin{equation*}\label{theta-alt}
\begin{aligned}
W^{-1}&\overline A(f(x),z)W=\frac{1}{2\sqrt{1-r(x)^2} } \times\\
&
\left[\begin{matrix}
  \exp(\frac{i\psi}{2})-r(x)\exp(-\frac{i\psi}{2}-i\phi(x))-r(x)\exp(\frac{i\psi}{2}+i\phi(x))+\exp(-\frac{i\psi}{2})\\
  i\Big(-\exp(\frac{i\psi}{2})+r(x)\exp(-\frac{i\psi}{2}-i\phi(x))-r(x)\exp(\frac{i\psi}{2}+i\phi(x))+\exp(-\frac{i\psi}{2})\Big)
\end{matrix}\right.\\
&\qquad\qquad
\left.\begin{matrix}
  i\Big(\exp(\frac{i\psi}{2})+r(x)\exp(-\frac{i\psi}{2}-i\phi(x))-r(x)\exp(\frac{i\psi}{2}+i\phi(x))-\exp(-\frac{i\psi}{2})\Big)\\
  \exp(\frac{i\psi}{2})+r(x)\exp(-\frac{i\psi}{2}-i\phi(x))+r(x)\exp(\frac{i\psi}{2}+i\phi(x))+\exp(-\frac{i\psi}{2})
\end{matrix}\right].
\end{aligned}
\end{equation*}

Thus, by a simple observation, we have following.

\begin{lem}
Let $z=e^{i\psi}\in \partial \mathbb D$ and let $f\in C^0(X,\mathbb D)$ be given by $f(x)=r(x)e^{i\phi(x)}.$ Then $W^{-1}\overline A(f(x),z)W$ is equal to
$$\frac{1}{\sqrt{1-r(x)^2}}\Big(\begin{bmatrix}
    \cos{\theta'}  & -\sin {\theta'}  \\
    \sin {\theta'}      & \cos {\theta'}
\end{bmatrix}
+ r(x)
\begin{bmatrix}
    -\cos\theta (x)   & \sin \theta(x)  \\
    \sin \theta  (x)     & \cos \theta(x)
\end{bmatrix}\Big)$$
where $\theta'=\frac \psi 2$ and $\theta(x)=\frac {\psi}2+\phi(x).$
\label{A}
\end{lem}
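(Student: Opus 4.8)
The plan is to obtain Lemma~\ref{A} by directly simplifying the four entries of the matrix displayed immediately before its statement, using nothing but Euler's formula. Write $z = e^{i\psi}$, $f(x) = r(x)e^{i\phi(x)}$, and set $\theta' = \psi/2$ and $\theta(x) = \psi/2 + \phi(x)$. Up to the common prefactor $\frac{1}{2\sqrt{1-r(x)^2}}$, each entry of $W^{-1}\overline A(f(x),z)W$ is a sum of two groups of complex exponentials: a first group involving only $\exp(i\psi/2)$ and $\exp(-i\psi/2)$, and a second group carrying the factor $r(x)$ and involving $\exp(i\psi/2 + i\phi(x))$ and $\exp(-i\psi/2 - i\phi(x))$. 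I would handle the two groups separately.

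For the first group, applying $e^{i\beta}+e^{-i\beta}=2\cos\beta$ and $e^{i\beta}-e^{-i\beta}=2i\sin\beta$ with $\beta=\theta'$ turns the $(1,1),(1,2),(2,1),(2,2)$ entries into $2\cos\theta'$, $-2\sin\theta'$, $2\sin\theta'$, $2\cos\theta'$ (the two off-diagonal entries acquiring a sign from the explicit factor $i$ in front of them combined with the $i$ from $2i\sin\beta$). Dividing by the common $2$ recovers the rotation matrix $\begin{bmatrix}\cos\theta' & -\sin\theta' \\ \sin\theta' & \cos\theta'\end{bmatrix}$. For the second group, the same two identities with $\beta=\theta(x)$, keeping careful track of the signs of the exponents, produce $-2r(x)\cos\theta(x)$, $2r(x)\sin\theta(x)$, $2r(x)\sin\theta(x)$, $2r(x)\cos\theta(x)$; dividing by $2$ gives $r(x)\begin{bmatrix}-\cos\theta(x) & \sin\theta(x) \\ \sin\theta(x) & \cos\theta(x)\end{bmatrix}$. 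Summing the two groups and pulling out the leftover scalar $\frac{1}{\sqrt{1-r(x)^2}}$ yields the asserted formula.

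Since the closed form for $W^{-1}\overline A(f(x),z)W$ has already been established above by a routine conjugation, there is no genuine obstacle here beyond careful sign bookkeeping — in particular, noticing that the two off-diagonal entries of the displayed matrix carry an explicit factor $i$, so that $e^{i\beta}-e^{-i\beta}=2i\sin\beta$ combines with it to give the real quantity $-2\sin\beta$. Alternatively, one could verify the lemma by expanding the claimed right-hand side via the addition formulas $\cos\theta(x)=\cos\theta'\cos\phi(x)-\sin\theta'\sin\phi(x)$ and $\sin\theta(x)=\sin\theta'\cos\phi(x)+\cos\theta'\sin\phi(x)$ and matching against the original $SU(1,1)$ entries, but grouping the exponentials is the most economical route.
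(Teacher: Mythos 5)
Your proposal is correct and is essentially the paper's own argument: the paper derives the same exponential form of $W^{-1}\overline A(f(x),z)W$ by conjugation and then asserts the lemma ``by a simple observation,'' which is precisely the Euler-formula grouping (terms without $r(x)$ giving the rotation by $\theta'$, terms with $r(x)$ giving the $\theta(x)$-matrix) that you carry out explicitly, with the sign bookkeeping for the off-diagonal factors of $i$ done correctly.
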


Given $z\in e^{i\psi}\in \partial \mathbb D$ with $\theta':=\psi/2,$ define $S'\subset SL(2,\mathbb R)$ as 
\begin{align*}
S'=\Big\{
\frac{1}{\sqrt{(1-s^2)}}\Big(\begin{bmatrix}
    \cos\theta'   & -\sin\theta'\\
    \sin \theta'    & \cos\theta'
\end{bmatrix}
+ s
&\begin{bmatrix}
    -\cos\theta    & \sin \theta  \\
    \sin \theta       & \cos \theta
\end{bmatrix}\Big) \\
&:s\in [0,1), \theta\in \mathbb R \Big\}.
\end{align*}
Let $A(x):=W^{-1}\overline A(f(x),z)W$. Then, we have $A \in C^0(X,S')$ and we may write it as in Lemma \ref{A}.

 \begin{comment}
 \begin{defn}
 Let $X$ be a compact metric space and $f:X\to X$ be a homeomorphism and let $A\in C^0(X,SL(2,\mathbb R).$
A continuous cocycle 
$$(f,A): X\times \mathbb R^2 \to X\times \mathbb R^2, (x,v)\to (f(x),A(x)v)$$
is \emph{hyperbolic} if there are $C>0$ and $\lambda<1$ and, for every $x\in X,$ there exists transverse lines $E^s_x$ and $E^u_x$ in $\mathbb R^2$ such that 
\begin{enumerate}
\item $A(x)E^s_x=E^s_{f(x)}$ and $A(x)E^u_x=E^u_{f(x)}$
\item $||A^n(x)v^s||\leq C\lambda^n||v^s||$ and $||A^{-n}(x)v^u||<C\lambda^n||v^u||$
\end{enumerate}
for every $v^s\in E^s_x, v^u\in E^u_x,x\in X$ and $n\geq 1.$
\end{defn}
\end{comment}

We assume that $f\in C^0(X,\mathbb D)$ is not identically zero. Let $y\in X$ be an element such that $f(y)\neq 0.$ We may choose a nonempty open set $V\subset X$ so that $y\in V$ and $f(x)\neq 0$ for all $x\in \overline V.$ In fact, there exist $r_1,r_2\in [0,1)$ such that $r_1\leq |f(x)|\leq r_2$ for all $x\in \overline V.$ Thus, $r_1\leq r(x)\leq r_2$ for all $x\in \overline V$.

By Lemma \ref{ABD_lemma10}, we may choose a uniformly hyperbolic cocycle $(T,B)$ such that $B\in C^0_{A,\bar V}(X,SL(2,\mathbb R))$ is arbitrarily $C^0-$ close to $A$. Write $B$ as
\begin{equation*}
B(x)=\frac{1}{\sqrt{1-r(x)^2} }\Big(\begin{bmatrix}
    \cos \theta'    & -\sin\theta'   \\
    \sin\theta'      & \cos \theta'
\end{bmatrix}
+ r(x)
\begin{bmatrix}
    b_{11}(x)      & b_{12}(x)   \\
    b_{21}(x)      & b_{22}(x)   
\end{bmatrix}\Big).
\end{equation*}

Set $B'(x):=B(x)$ for $x\in X\setminus \overline V.$ For $x\in \overline V,$ we define $B'\in C^0(X,S')$ as follows:

\begin{enumerate}[label=(\Roman*)]
\item Denote $R_\eta$ as the $2\times 2$ rotation matrix with the angle $\eta\in \mathbb R.$ Recall since $(T,B)$ is uniformly hyperbolic there exists a continuous map $u$ from $X$ to the unit circle in $\mathbb R^2$ such that $u(x)\in E^u_x.$ Let $R_{-\tau(x)}\cdot u(x)=(1,0).$ Consider the matrix
\begin{equation}
\begin{bmatrix}
    y_{11}(x)      & y_{12}(x)   \\
    y_{21}(x)      & y_{22}(x)   
\end{bmatrix}
:=
\begin{bmatrix}
    b_{11}(x)      & b_{12}(x)   \\
    b_{21}(x)      & b_{22}(x)   
\end{bmatrix}
\cdot 
R_{\tau(x)}.
\label{Y}
\end{equation}

\item Normalize the vector $(y_{11}(x), y_{21}(x))$ (then, we may write it as $(-\cos \tilde \theta, \sin \tilde \theta)$ for some $\tilde \theta\in \mathbb R)$ and replace the vector $(y_{12}(x), y_{22}(x))$ by $(\sin \tilde \theta, \cos \tilde \theta).$ 
\item Set $B'$ as
\begin{align*}
B'(x)=\frac{1}{\sqrt{1-r(x)^2} }\Big(&\begin{bmatrix}
    \cos \theta'    & -\sin\theta'   \\
    \sin\theta'      & \cos \theta'
\end{bmatrix}\\
&+ r(x)
\begin{bmatrix}
    -\cos\tilde \theta(x)      & \sin\tilde \theta (x)  \\
    \sin\tilde \theta (x)     & \cos\tilde \theta (x)  
\end{bmatrix}
R_{-\tau(x)} \Big).
\end{align*}

\end{enumerate}

Observe that, as an element in the projective line of $\mathbb R^2,$ we have
\begin{align*}
\begin{bmatrix}
    -\cos\tilde \theta(x)      & \sin\tilde \theta (x)  \\
    \sin\tilde \theta (x)     & \cos\tilde \theta (x)  
\end{bmatrix}
R_{-\tau(x)}
\cdot u(x)
&=
\begin{bmatrix}
    -\cos\tilde \theta(x)      & \sin\tilde \theta (x)  \\
    \sin\tilde \theta (x)     & \cos\tilde \theta (x)  
\end{bmatrix}
\begin{bmatrix}
    1         \\
    0
\end{bmatrix}\\
&=
\begin{bmatrix}
    y_{11}(x)      & y_{12}(x)   \\
    y_{21}(x)      & y_{22}(x)   
\end{bmatrix}
\begin{bmatrix}
    1         \\
    0
\end{bmatrix}\\
&=
\begin{bmatrix}
    b_{11}(x)      & b_{12}(x)   \\
    b_{21}(x)      & b_{22}(x)   
\end{bmatrix}
u(x).
\end{align*}

\begin{lem}
Given $\epsilon>0,$ there exists $\delta>0$ so that $||A-B||_{C^0}<\delta$ implies $||B-B'||_{C^0}<\epsilon.$
\end{lem}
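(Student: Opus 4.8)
The plan is to reduce the estimate to a single elementary Lipschitz bound for a ``projection onto reflections'' in $GL(2,\mathbb R)$, and then to verify that every constant which appears depends only on the fixed data $f,V$ and not on the perturbation $B$. First, off $\overline V$ there is nothing to prove, since $B'(x)=B(x)$ there. On the compact set $\overline V$ we have $r(x)=|f(x)|\in[r_1,r_2]$ with $0<r_1\le r_2<1$, and the recipe (I)--(III) for $B'$ is well defined and continuous as soon as $\delta$ is small enough that the vector $(y_{11},y_{21})$ from step (I) stays bounded away from $0$. For $\xi\in\mathbb R$ let $P_\xi$ be the orthogonal reflection with first column $(-\cos\xi,\sin\xi)^{\mathsf T}$ and second column $(\sin\xi,\cos\xi)^{\mathsf T}$ --- the perturbation block occurring in Lemma~\ref{A} and in the definition of $S'$; a direct computation gives $P_\xi^2=\mathrm{id}$ and $P_\xi R_\tau=P_{\xi+\tau}$ for every rotation $R_\tau$. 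By Lemma~\ref{A}, $A(x)=\frac{1}{\sqrt{1-r(x)^2}}\bigl(R_{\theta'}+r(x)P_{\theta(x)}\bigr)$, and $B$ (resp.\ $B'$) has the same form with $P_{\theta(x)}$ replaced by the block $M_B(x)=\bigl[b_{ij}(x)\bigr]$ of the displayed formula for $B$ (resp.\ by $M_{B'}(x)=P_{\tilde\theta(x)}R_{-\tau(x)}$). Subtracting, the $R_{\theta'}$ terms cancel, so
$$\|B(x)-B'(x)\|=\frac{r(x)}{\sqrt{1-r(x)^2}}\,\|M_B(x)-M_{B'}(x)\|\le\frac{r_2}{\sqrt{1-r_2^2}}\,\|M_B(x)-M_{B'}(x)\|,$$
and likewise $\|M_B(x)-P_{\theta(x)}\|=\frac{\sqrt{1-r(x)^2}}{r(x)}\,\|B(x)-A(x)\|\le r_1^{-1}\,\|A-B\|_{C^0}$. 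Hence the lemma follows once $\|M_B-M_{B'}\|$ is bounded, on $\overline V$, by a constant times $\|M_B-P_\theta\|$.

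Now unwind steps (I)--(III). With $Y(x):=M_B(x)R_{\tau(x)}$ as in (I), steps (II)--(III) replace $Y(x)$ by $\widetilde Q(x):=P_{\tilde\theta(x)}=M_{B'}(x)R_{\tau(x)}$, the unique matrix of the form $P_\bullet$ whose first column is the normalization of the first column of $Y(x)$. Since $R_{\tau(x)}$ is orthogonal, $\|M_B-M_{B'}\|=\|Y-\widetilde Q\|$ and, with $Q(x):=P_{\theta(x)}R_{\tau(x)}=P_{\theta(x)+\tau(x)}$, also $\|M_B-P_\theta\|=\|Y-Q\|$. So it is enough to show $\|\widetilde Q-Y\|\le 6\,\|Y-Q\|$ whenever $\|Y-Q\|\le\tfrac12$. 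This is elementary: the first column of $Y$ lies within $\|Y-Q\|$ of the unit vector forming the first column of $Q$, hence (local Lipschitz continuity of $v\mapsto v/\|v\|$ near the unit circle) the first column of $\widetilde Q$ lies within $4\|Y-Q\|$ of it, and therefore within $\|Y-Q\|$ of the first column of $Y$; moreover the second column of every $P_\bullet$ is the image of its first column under the fixed rotation $R_{-\pi/2}$, so the second columns of $\widetilde Q$ and $Q$ differ by at most $4\|Y-Q\|$, whence the second column of $\widetilde Q$ lies within $5\|Y-Q\|$ of that of $Y$. Summing the two column contributions gives the claim.

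Chaining the three inequalities, $\|B-B'\|_{C^0}\le\dfrac{6\,r_2}{r_1\sqrt{1-r_2^2}}\,\|A-B\|_{C^0}=:C\,\|A-B\|_{C^0}$, and we take $\delta:=\min\{\epsilon/C,\ r_1/2\}$; the second term forces $\|Y-Q\|=\|M_B-P_\theta\|\le r_1^{-1}\delta\le\tfrac12$, which is precisely what the Lipschitz step (and the well-definedness of $\widetilde Q$) required. The one point that genuinely needs attention is uniformity in $B$: the unstable line $E^u_x$, hence the angle $\tau(x)$, is determined by the perturbed cocycle $(T,B)$ and changes with $B$, so a careless argument could yield a $\delta$ depending on $B$. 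The computation above is organized to avoid this --- $\tau(x)$ enters only through the orthogonal factor $R_{\tau(x)}$, which drops out of every matrix norm, and the remaining Lipschitz constants are universal --- leaving $r_1,r_2$, which come solely from $f$ and $V$, as the only quantitative inputs. I expect this bookkeeping, rather than any individual estimate, to be the crux.
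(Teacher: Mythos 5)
Your argument is correct and is essentially the paper's proof in cleaner clothing: the paper likewise passes to the perturbation blocks, right-multiplies by $R_{\tau(x)}$ so that the unstable-direction angle drops out, compares the columns of $Y$ with those of $P_{\theta+\tau}$, and controls the normalization step by taking $B$ closer to $A$, exactly as your $|1-\|v\||\le\|v-u\|$ estimate does; your version just makes the constants (and the uniformity in $B$) explicit. The only blemish is the word ``therefore'' in the first-column estimate -- the stated bound $\|\hat v - v\|\le\|Y-Q\|$ follows from the direct computation $\|\hat v - v\|=|1-\|v\||\le\|v-u\|$ rather than from the $4\|Y-Q\|$ bound -- but the inequality itself is true, so the proof stands.
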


\begin{proof}

It suffices to show that given $\epsilon >0 $, there exists $\delta>0$ so that 
$$\Big| \Big|\begin{bmatrix}
    b_{11}(x)      & b_{12}(x)   \\
    b_{21}(x)      & b_{22}(x)   
\end{bmatrix}
-
\begin{bmatrix}
    -\cos\theta (x)   & \sin \theta(x)  \\
    \sin \theta  (x)     & \cos \theta(x)
\end{bmatrix}\Big| \Big|_{C^0}<\delta$$
implies 

$$\Big| \Big|\begin{bmatrix}
    -\cos\tilde \theta(x)      & \sin\tilde \theta (x)  \\
    \sin\tilde \theta (x)     & \cos\tilde \theta (x)  
\end{bmatrix}R_{-\tau(x)}
-
\begin{bmatrix}
    b_{11}(x)  &b_{12}(x)        \\
    b_{21} (x)&b_{22}(x)     
\end{bmatrix}\Big| \Big|_{C^0}<\epsilon.
$$

Set $\delta=\epsilon/4.$ Then, 

$$\Big| \Big|\begin{bmatrix}
    y_{11}(x)      & y_{12}(x)   \\
    y_{21}(x)      & y_{22}(x)   
\end{bmatrix}
-
\begin{bmatrix}
    -\cos\theta (x)   & \sin \theta(x)  \\
    \sin \theta  (x)     & \cos \theta(x)
\end{bmatrix}R_{\tau(x)}\Big| \Big|_{C^0}<\epsilon/4.$$

In particular, we have

$$\Big| \Big|\begin{bmatrix}
    y_{11}(x)        \\
    y_{21}(x)     
\end{bmatrix}
-
\begin{bmatrix}
    -\cos(\theta (x)+\tau(x))    \\
    \sin (\theta (x)+\tau(x))
\end{bmatrix}\Big| \Big|<\epsilon/4
$$
and
$$\Big| \Big|\begin{bmatrix}
    y_{12}(x)        \\
    y_{22}(x)     
\end{bmatrix}
-
\begin{bmatrix}
    \sin(\theta (x)+\tau(x))    \\
    \cos (\theta (x)+\tau(x))
\end{bmatrix}\Big| \Big|<\epsilon/4.
$$

This implies that 
$$\Big| \Big|\begin{bmatrix}
    y_{21}(x)        \\
    -y_{11}(x)     
\end{bmatrix}
-
\begin{bmatrix}
    y_{12}(x)        \\
    y_{22}(x)     
\end{bmatrix}\Big| \Big|<\epsilon/2.
$$
Therefore, we have
$$\Big| \Big|\begin{bmatrix}
    y_{11}(x)  &y_{21}(x)        \\
    y_{21}(x) &-y_{11}(x)     
\end{bmatrix}
-
\begin{bmatrix}
     y_{11} (x) &y_{12}(x)        \\
     y_{21} (x) &y_{22}(x)     
\end{bmatrix}\Big| \Big|_{C^0}<\epsilon/2.
$$

If necessary, choose a $C^0-$ closer $B$ to $A$ (so, smaller $\delta$) so that 
$$\Big|\Big|\frac 1{\sqrt{y_{11}(x)^2+y_{21}(x)^2}}\begin{bmatrix}
    y_{11} (x) &y_{21}(x)        \\
    y_{21} (x)&-y_{11}(x)     
\end{bmatrix}
-\begin{bmatrix}
    y_{11}(x)  &y_{21}(x)        \\
    y_{21} (x)&-y_{11}(x)     
\end{bmatrix}\Big|\Big|_{C^0}<\epsilon /2$$
for all $x\in X.$

Then, by the triangle inequality,
$$\Big| \Big|\frac{1}{\sqrt{y_{11}(x)^2+y_{21}(x)^2  }}
\begin{bmatrix}
    y_{11} (x) &y_{21}(x)        \\
    y_{21} (x)&-y_{11}(x)     
\end{bmatrix}
-
\begin{bmatrix}
    y_{11}(x)  &y_{12}(x)        \\
    y_{21}(x) &y_{22}(x)     
\end{bmatrix}\Big| \Big|_{C^0}<\epsilon.
$$

In conclusion,

$$\Big| \Big|\begin{bmatrix}
    -\cos\tilde \theta(x)      & \sin\tilde \theta (x)  \\
    \sin\tilde \theta (x)     & \cos\tilde \theta (x)  
\end{bmatrix}
-
\begin{bmatrix}
    y_{11} (x) &y_{12}(x)        \\
    y_{21} (x)&y_{22}(x)     
\end{bmatrix}\Big| \Big|_{C^0}<\epsilon,
$$

which implies

$$\Big| \Big|\begin{bmatrix}
    -\cos\tilde \theta(x)      & \sin\tilde \theta (x)  \\
    \sin\tilde \theta (x)     & \cos\tilde \theta (x)  
\end{bmatrix}R_{-\tau(x)}
-
\begin{bmatrix}
    b_{11}(x)  &b_{12}(x)        \\
    b_{21} (x)&b_{22}(x)     
\end{bmatrix}\Big| \Big|_{C^0}<\epsilon
.$$

\end{proof}

Let $N$ be the closed annulus on the $\mathbb R^2$-plane centered at the origin with radius $r_1/\sqrt{1- r_1^2}\leq \rho \leq r_2/\sqrt{1- r_2^2}$. That is,
$$N:=\{\frac{r}{\sqrt{1-r^2}}(\cos\eta, \sin \eta )\in \mathbb R^2: r_1 \leq r \leq r_2, \eta\in \mathbb R \}.$$

 Note that 
$$\frac{1}{\sqrt{1-r^2}}-\frac{r}{\sqrt{1-r^2}}<1.$$
Let $\underline \epsilon<1$ be a number such that 
$$\frac{1}{\sqrt{1-r_1^2}}-\frac{\underline \epsilon \cdot  r_1}{\sqrt{1- r_1^2}}<1$$

and let $\overline \epsilon>1$ be a number such that 
$$\frac{1}{\sqrt{1- r_2^2}}-\frac{\overline \epsilon \cdot   r_2}{\sqrt{1-  r_2^2}}>0.$$

Given $ \underline \epsilon <\epsilon <\overline \epsilon,$ we define $h_{\epsilon}:N \to [0,1)$ and $g_{\epsilon}:N \to [-\pi,\pi]$ as follows:

 Given $t\in N$ with $|t|=r/\sqrt{1-r^2}.$ we may choose $\eta\in \mathbb R$ such that 
$$t=\frac{r}{\sqrt{1-r^2}}
\begin{bmatrix}
    -\cos\eta    & \sin\eta \\
    \sin \eta    & \cos \eta 
\end{bmatrix}\cdot
\begin{bmatrix}
    1  \\
    0  
\end{bmatrix}
$$
Consider the vector 
\begin{equation}
\frac{1}{\sqrt{1-r^2}}\Big(\begin{bmatrix}
-1\\
0
\end{bmatrix}
+ r
\begin{bmatrix}
    -\cos\eta    & \sin\eta \\
    \sin \eta    & \cos \eta 
\end{bmatrix}\cdot
\begin{bmatrix}
   \epsilon\\
    0
\end{bmatrix}\Big).
\label{F}
\end{equation}

Then, there exist unique $s\in [0,1)$ and $\beta\in [-\pi,\pi]$ so that 
$$
\frac{1}{\sqrt{1-s^2}}\Big(\begin{bmatrix}
-1\\
0
\end{bmatrix}
+ s
\begin{bmatrix}
    -\cos\eta    & \sin\eta \\
    \sin \eta    & \cos \eta 
\end{bmatrix}\cdot R_{\beta}
\cdot
\begin{bmatrix}
  1\\
    0
\end{bmatrix}\Big)$$

coincides with (\ref{F}).
We define $h_\epsilon (t)=s$ and $g_\epsilon (t)=\beta.$
Here are some properties of $h_\epsilon:$

\begin{lem}
\begin{enumerate}[label=(\alph*)]
\item $h_\epsilon$ is continuous.
\item given $t\in N$ with $|t|=r/\sqrt{1-r^2},$ we have $h_\epsilon(t)\to r$ as $\epsilon \to 1.$
\item let $t\in N$ with $|t|=r/\sqrt{1-r^2}$ and let $\epsilon'\leq \epsilon\leq1.$ Then, we have $h_{\epsilon'}(t)\leq h_{\epsilon}(t)\leq r.$
\item let $t\in N$ with $|t|=r/\sqrt{1-r^2}$ and let $\epsilon'\geq \epsilon\geq 1.$ Then, we have $h_{\epsilon'}(t)\geq h_{\epsilon}(t)\geq r.$
\item let $h:N\to [0,1)$ be the function defined by $h(t)=r$ if $|t|=r/\sqrt{1-r^2}.$ Let $\{\epsilon_n\}$ be a sequence such that  $\epsilon_n\to 1$ and $\epsilon_n\leq \epsilon_{n+1}$ (or $\epsilon_n\geq \epsilon_{n+1}$) for all $n.$ Then, $\{h_{\epsilon_n}\}$converges uniformly to $h.$
 \end{enumerate}
 \label{h_lemma}
 \end{lem}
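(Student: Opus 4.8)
The plan is to analyze each item of Lemma \ref{h_lemma} by unpacking the implicit definition of $h_\epsilon$ and $g_\epsilon$. First I would set up coordinates: fix $t\in N$ with $|t|=r/\sqrt{1-r^2}$, pick the corresponding angle $\eta$, and compute the vector in (\ref{F}) explicitly; call it $w_\epsilon(t)\in\mathbb R^2$. Its first coordinate is $\tfrac{1}{\sqrt{1-r^2}}(-1-\epsilon r\cos\eta)$ and its second coordinate is $\tfrac{1}{\sqrt{1-r^2}}(\epsilon r\sin\eta)$. On the other side, for parameters $(s,\beta)$ the vector is $\tfrac{1}{\sqrt{1-s^2}}\bigl((-1,0)^{T}+s R^{-}_\eta R_\beta(1,0)^{T}\bigr)$ where $R^{-}_\eta=\begin{bmatrix}-\cos\eta&\sin\eta\\ \sin\eta&\cos\eta\end{bmatrix}$; since $R^{-}_\eta R_\beta (1,0)^T$ is again a unit vector and $R^{-}_\eta$ is an orthogonal involution, the map $(s,\beta)\mapsto$ that vector is, up to the explicit diffeomorphism $(s,\beta)\mapsto \tfrac{s}{\sqrt{1-s^2}}(\text{unit vector of angle }\beta)$ precomposed with a rigid motion, just a chart of the punctured disk-complement. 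The key structural observation is that $v\mapsto \tfrac{1}{\sqrt{1-s^2}}((-1,0)^T+s\,\hat v)$ (with $|\hat v|=1$, $s\in[0,1)$ determined by $v$) is a homeomorphism from $\{|\hat v|=1\}\times[0,1)$ onto $\mathbb R^2$: its inverse is smooth and explicit because $\|\tfrac{1}{\sqrt{1-s^2}}((-1,0)^T+s\hat v)\|^2$ and the direction pin down $s$ and $\hat v$ uniquely. This gives both the existence/uniqueness of $(s,\beta)$ asserted in the definition and the continuity in (a), since $h_\epsilon$ is a composition of continuous maps (the chart, its inverse, and $\epsilon\mapsto w_\epsilon(t)$ jointly continuous in $(t,\epsilon)$).

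For (b), note that at $\epsilon=1$ the vector (\ref{F}) is exactly $\tfrac{1}{\sqrt{1-r^2}}((-1,0)^T+r R^{-}_\eta(1,0)^T)$, i.e.\ the case $s=r$, $\beta=0$; by the uniqueness just established, $h_1(t)=r$ and $g_1(t)=0$, and continuity of the inverse chart combined with $w_\epsilon(t)\to w_1(t)$ as $\epsilon\to1$ gives $h_\epsilon(t)\to r$. For the monotonicity statements (c) and (d), I would compute $\|w_\epsilon(t)\|^2$ as a function of $\epsilon$: it equals $\tfrac{1}{1-r^2}(1+2\epsilon r\cos\eta+\epsilon^2 r^2)$ — wait, one must track the sign of the cross term carefully from the explicit coordinates, but in any case it is a quadratic in $\epsilon$. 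Meanwhile $s=h_\epsilon(t)$ is tied to $\|w_\epsilon(t)\|$ by the monotone relation $\|w_\epsilon(t)\|^2=\tfrac{1}{1-s^2}(1+\cdots)$; the cleanest route is to observe that the norm of $\tfrac{1}{\sqrt{1-s^2}}((-1,0)^T+s\hat v)$, ranging over all unit $\hat v$, sweeps the interval $[\,\tfrac{1-s}{\sqrt{1-s^2}},\tfrac{1+s}{\sqrt{1-s^2}}\,]=[\sqrt{\tfrac{1-s}{1+s}},\sqrt{\tfrac{1+s}{1-s}}]$, and both endpoints are monotone in $s$. The real content is that as $\epsilon$ increases through $1$, the point (\ref{F}) moves monotonically in the radial-type parameter that controls $s$; I expect this to reduce, after the explicit computation, to the elementary fact that $\epsilon\mapsto$ (the relevant coordinate combination) is monotone. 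So (c) and (d) follow once the sign bookkeeping in the coordinates of (\ref{F}) is done, and the $\epsilon=1$ value $r$ anchors the inequalities.

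Finally, (e) is a compactness/Dini-type argument: by (c) (resp.\ (d)) the sequence $\{h_{\epsilon_n}\}$ is monotone in $n$ for each fixed $t$, by (b) it converges pointwise to $h$, each $h_{\epsilon_n}$ is continuous by (a), $h$ is continuous (it is $t\mapsto r$, a continuous function of $|t|$ on the annulus), and $N$ is compact; Dini's theorem then upgrades pointwise monotone convergence to uniform convergence. The main obstacle I anticipate is not any of these individually but the first one: carefully verifying the existence and uniqueness of the pair $(s,\beta)\in[0,1)\times[-\pi,\pi]$ representing (\ref{F}), i.e.\ establishing that the parametrization $(s,\beta)\mapsto \tfrac{1}{\sqrt{1-s^2}}((-1,0)^T+sR^{-}_\eta R_\beta(1,0)^T)$ is a genuine homeomorphism onto $\mathbb R^2$ (including behavior near $s\to1$ and the endpoint identification $\beta=\pm\pi$), and then extracting from that homeomorphism the explicit continuous formula for $s$ in terms of $t$ and $\epsilon$ that makes both continuity and the monotonicity computations transparent. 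Once that chart is pinned down, everything else is routine.
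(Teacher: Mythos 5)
Your overall architecture (explicit chart for (a)--(b), monotonicity for (c)--(d), Dini for (e)) is reasonable, and part (e) is exactly the paper's argument (the paper dismisses (a)--(d) as ``easy to check'' and only writes out Dini). But two steps of your plan do not hold up as written. First, the ``key structural observation'' is false as stated: the map $(s,\hat v)\mapsto \frac{1}{\sqrt{1-s^2}}\bigl((-1,0)^T+s\hat v\bigr)$ is \emph{not} a homeomorphism of $[0,1)\times S^1$ onto $\mathbb R^2$; it collapses $\{0\}\times S^1$ to the single point $(-1,0)$, and its image is only the open left half-plane. Indeed, setting $u=\sqrt{1-s^2}$, the equation forces $u=\frac{-2w_1}{\|w\|^2+1}$ (where $w_1$ is the first coordinate of the target vector $w$), so a preimage with $s\in[0,1)$ exists iff $w_1<0$. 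The repair is easy and gives you the explicit continuous formula $h_{\epsilon}(t)=\sqrt{1-u^2}$ you want, but then you must verify that the vector (\ref{F}) really has negative first coordinate, i.e.\ $1+\epsilon r\cos\eta>0$; this is precisely what the paper's defining condition on $\overline\epsilon$ (equivalently $\overline\epsilon\, r_2<1$) guarantees, and your sketch never invokes it. (Also, uniqueness of $\beta$ fails at points where $s=0$, which can occur at admissible parameters; this is harmless for $h_\epsilon$ but your uniqueness claim should be restricted to $s$.)

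Second, and more seriously, (c)--(d) — the substantive input to Dini — are not proved: your ``cleanest route'' via $\|w_\epsilon(t)\|$ cannot work, because $s$ is not a function of the norm alone (the formula above shows $s$ depends on $w_1$ and $\|w\|$ separately), so comparing the range of possible norms $[\sqrt{\tfrac{1-s}{1+s}},\sqrt{\tfrac{1+s}{1-s}}]$ says nothing about monotonicity in $\epsilon$. Carrying out the computation you postponed: $u(\epsilon)=\frac{2\sqrt{1-r^2}\,(1+\epsilon r\cos\eta)}{2+2\epsilon r\cos\eta+r^2(\epsilon^2-1)}$, and differentiating shows that $h_\epsilon(t)$ is nondecreasing in $\epsilon$ exactly where $2\epsilon+r\cos\eta\,(1+\epsilon^2)\ge 0$. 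This holds for $\epsilon$ near $1$ (which is all that the application in Proposition \ref{CMV_prop1} uses), but it is not automatic on the whole interval $(\underline\epsilon,1]$: for instance $r=0.8$, $\eta=\pi$ gives $h_{0.2}(t)\approx 0.32>h_{0.4}(t)\approx 0.12$, and such $\epsilon$ are admissible whenever $r_1$ is small, since the paper's lower bound on $\underline\epsilon$ involves $r_1$ rather than $r_2$. So the monotonicity you ``expect to follow from sign bookkeeping'' is in fact the delicate point of the lemma; your write-up must either establish the sign condition on the relevant $\epsilon$-range (e.g.\ tie $\underline\epsilon,\overline\epsilon$ to $r_2$, or restrict to $\epsilon$ near $1$) or replace (c)--(e) by a direct uniform-continuity argument for $(\epsilon,t)\mapsto h_\epsilon(t)$ near $\epsilon=1$, which your explicit formula would also support. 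As it stands, the proposal does not prove parts (c) and (d).
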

 
 \begin{proof}
Parts (a) to (d) are easy to check. For part (e), suppose that $\{\epsilon_n\}$ is a sequence such that  $\epsilon_n\to 1$ and $\epsilon_{n}\leq \epsilon_{n+1}\leq 1$ for all $n.$ By part (b), $\{h_{\epsilon_n}\} $ pointwise converges to $h$. By part (c), $h_{\epsilon_n}(t)\leq h_{\epsilon_{n+1}}(t)$ for all $n$ and for all $t\in T.$ 
 
 Thus, by Dini's theorem, $\{h_{\epsilon_n}\}$ uniformly converges to $h.$ A similar argument shows the uniform convergence of $\{h_{\epsilon_n}\}$ with $\epsilon_{n}\geq \epsilon_{n+1}\geq 1$ for all $n.$ 
  \end{proof}

Here are some properties of $g_\epsilon:$

\begin{lem}
\begin{enumerate}[label=(\alph*)]
\item $g_\epsilon$ is continous.
\item given $t\in N,$ we have $g_\epsilon(t)\to 0$ as $\epsilon \to 1.$
\item let $t=\rho (\cos \eta, \sin \eta)\in N$ with $\eta\in [0,\pi].$ If $\epsilon'\leq \epsilon\leq 1,$ we have $g_{\epsilon'}(t)\leq g_{\epsilon}(t)\leq 0$. If $\epsilon'\geq \epsilon\geq 1,$ we have $g_{\epsilon'}(t)\geq g_{\epsilon}(t)\geq 0$.
\item let $t=\rho (\cos \eta, \sin \eta)\in N$ with $\eta\in [\pi,2\pi].$ If $\epsilon'\leq \epsilon \leq 1,$ we have $g_{\epsilon'}(t)\geq g_{\epsilon}(t)\geq 0$. If $\epsilon'\geq \epsilon\geq 1,$ we have $g_{\epsilon'}(t)\leq g_{\epsilon}(t)\leq 0$.
 \item let $\{\epsilon_n\}$ be a sequence such that  $\epsilon_n\to 1$ and $\epsilon_n\leq \epsilon_{n+1}$ (or $\epsilon_n\geq \epsilon_{n+1}$) for all $n.$  Then, $\{g_{\epsilon_n}\}$ uniformly converges to $g=0.$
 \end{enumerate}
 \label{g_lemma}
 \end{lem}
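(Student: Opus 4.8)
The plan is to follow the template of the proof of Lemma~\ref{h_lemma}: establish (a)--(d) directly from the implicit definition of $g_\epsilon$, and then deduce the uniform convergence in (e) via Dini's theorem. The elementary fact that drives everything is that at $\epsilon=1$ the vector in~(\ref{F}) equals $\frac{1}{\sqrt{1-r^2}}\big((-1,0)^{\mathsf{T}}+r\,(-\cos\eta,\sin\eta)^{\mathsf{T}}\big)$, which is exactly the target vector displayed after~(\ref{F}) with $s=r$ and $\beta=0$; by the uniqueness asserted there, $h_1(t)=r$ and, crucially, $g_1\equiv 0$. So $g=0$ is simultaneously the pointwise limit and the prospective uniform limit.

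For (a) and (b): with $M_\eta=\begin{bmatrix}-\cos\eta&\sin\eta\\ \sin\eta&\cos\eta\end{bmatrix}$, the assignment $(s,\beta)\mapsto \frac{1}{\sqrt{1-s^2}}\big((-1,0)^{\mathsf{T}}+s\,M_\eta R_\beta(1,0)^{\mathsf{T}}\big)$ is a local homeomorphism onto its image (with $\beta$ irrelevant at $s=0$) --- this is essentially the content of the existence-and-uniqueness statement preceding the definition. Since $t\mapsto\eta$ and $t\mapsto$ [the vector~(\ref{F})] are continuous on $N$, composing with the inverse and reading off the $\beta$-coordinate gives continuity of $g_\epsilon$, which is (a). For (b), the vector~(\ref{F}) depends continuously on $\epsilon$ and reduces at $\epsilon=1$ to the data $(s,\beta)=(r,0)$; continuity of the inverse then forces $g_\epsilon(t)\to 0$ as $\epsilon\to1$, in fact uniformly in $t\in N$, since~(\ref{F}) converges to its $\epsilon=1$ value uniformly on the compact annulus $N$.

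For (c) and (d): both the sign of $g_\epsilon(t)$ near the endpoint $\epsilon=1$ and its monotonicity in $\epsilon$ come down to tracking how the angular coordinate $\beta$ of the decomposition moves as the length of the perturbing vector $r\epsilon\,(-\cos\eta,\sin\eta)^{\mathsf{T}}$ grows past, or shrinks below, its value $r\,(-\cos\eta,\sin\eta)^{\mathsf{T}}$ at $\epsilon=1$. Because $\frac{d}{d\epsilon}$ of the vector~(\ref{F}) is the fixed vector $t$ itself, a direct computation of $\partial\beta/\partial\epsilon$ at fixed $t$ (equivalently, a convexity argument for the leaves $s\mapsto \frac{1}{\sqrt{1-s^2}}\big((-1,0)^{\mathsf{T}}+s\,M_\eta R_\beta(1,0)^{\mathsf{T}}\big)$) shows the sign of this motion is governed by the sign of the second coordinate of $t$, i.e.\ by whether $t$ lies in the upper half-annulus ($\eta\in[0,\pi]$ in the normalization of (c)) or the lower one ($\eta\in[\pi,2\pi]$ in (d)) --- precisely the dichotomy recorded in (c) versus (d). I expect this sign bookkeeping, together with reconciling the $\eta$ of the definition of $g_\epsilon$ (where $t$ points along $(-\cos\eta,\sin\eta)$) with the $\eta$ of (c)/(d) (where $t$ points along $(\cos\eta,\sin\eta)$), to be the only genuinely fiddly point, and it is still routine.

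Finally (e): by (b), $g_{\epsilon_n}\to 0$ pointwise on $N$ (after discarding the finitely many $n$ with $\epsilon_n\notin(\underline\epsilon,\overline\epsilon)$). Split $N=N^+\cup N^-$ into the two closed half-annuli on which the second coordinate of $t$ is $\geq 0$, resp.\ $\leq 0$; each is compact, and by (c)--(d) the sequence $\{g_{\epsilon_n}\}$ is monotone on each of $N^+$ and $N^-$ --- increasing to $0$ on one and decreasing to $0$ on the other, according as $\epsilon_n\uparrow 1$ or $\epsilon_n\downarrow 1$. Since each $g_{\epsilon_n}$ is continuous by (a) and the pointwise limit $0$ is continuous, Dini's theorem applies on $N^+$ and on $N^-$ separately, giving uniform convergence on each and hence on $N$; so $\{g_{\epsilon_n}\}$ converges uniformly to $g=0$. (Equivalently, $\{|g_{\epsilon_n}|\}$ is a decreasing sequence of continuous nonnegative functions on the compact set $N$ converging pointwise to $0$, and a single application of Dini finishes it.) In short there is no deep obstacle: the work is confined to the sign-tracking in (c)/(d) and to remembering to partition $N$ in (e), because the direction of monotonicity of $g_{\epsilon_n}$ is not constant over the whole annulus.
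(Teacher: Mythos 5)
Your proposal is correct and follows essentially the same route as the paper: parts (a)--(d) are treated as direct checks from the implicit definition of $g_\epsilon$, and part (e) is deduced from (b), (c), (d) via Dini's theorem on the compact annulus. Your explicit splitting of $N$ into the two half-annuli (or the equivalent use of $|g_{\epsilon_n}|$) merely makes precise the application of Dini that the paper leaves implicit, since the direction of monotonicity differs on the two halves.
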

 
 \begin{proof}
Parts (a) to (d) are easy to check. For part (e), suppose that $\{\epsilon_n\}$ is a sequence such that  $\epsilon_n\to 1$ and $\epsilon_{n}\leq \epsilon_{n+1}\leq 1$ for all $n.$ By part (b), $\{g_{\epsilon_n}\} $ converges pointwise to $g=0$. By part (c), $g_{\epsilon_n}(t)\leq g_{\epsilon_{n+1}}(t)$ for all $n$ if $t= \rho (\cos \eta, \sin \eta)$ with $\eta\in [0,\pi]$. By part (d), $g_{\epsilon_n}(t)\geq g_{\epsilon_{n+1}}(t)$ for all $n$ if $t= \rho (\cos \eta, \sin \eta)$ with $\eta\in [\pi, 2\pi].$

Thus, by Dini's theorem, $\{g_{\epsilon_n}\}$ uniformly converges to $g=0.$ A similar argument shows the uniform convergence of $\{g_{\epsilon_n}\}$ with $\epsilon_{n}\geq \epsilon_{n+1}\geq 1$ for all $n.$ 
 \end{proof}

Note that by setting $h(\epsilon, t):=h_\epsilon(t)$ and $g(\epsilon,t)=g_\epsilon(t)$, it is easy to see that $h:(\underline \epsilon, \overline \epsilon)\times N\to \mathbb [0,1)$ and $g:(\underline \epsilon, \overline \epsilon)\times N\to \mathbb [-\pi,\pi]$ are continuous functions. 

Now, set $B''(x)=B(x)$ for $x\in X\setminus \overline V.$ For $x\in \overline V,$ define $B''\in C^0(X,S')$ as follows:

\begin{enumerate}[label=(\Roman*)]
\item Consider 
\begin{align*}B'(x)=\frac{1}{\sqrt{1-r(x)^2} }\Big(&\begin{bmatrix}
    \cos \theta'    & -\sin\theta'   \\
    \sin\theta'      & \cos \theta'
\end{bmatrix}\\
&+ r(x)
\begin{bmatrix}
    -\cos\tilde \theta(x)      & \sin\tilde \theta (x)  \\
    \sin\tilde \theta (x)     & \cos\tilde \theta (x)  
\end{bmatrix}
R_{-\tau(x)} \Big)
\end{align*}
and $\epsilon(x)=\sqrt{y_{11}(x)^2+y_{21}(x)^2 }$ (see (\ref{Y})). We may assume $\underline \epsilon <\epsilon(x)<\overline \epsilon$ for all $x\in X$ by taking a $C^0-$close enough $B$ to $A.$

\item Let $\omega(x)\in [0,2\pi]$ be such that $R_{\omega(x)}\cdot R_{\theta'}\cdot u(x)=(-1,0).$ Define $t(x)\in N$ as 
$$t(x)=
\rho(x) \cdot R_{\omega(x)}\cdot\begin{bmatrix}
    -\cos\tilde \theta(x)      & \sin\tilde \theta (x)  \\
    \sin\tilde \theta (x)     & \cos\tilde \theta (x)  
\end{bmatrix}R_{-\tau(x)}
u(x)
$$
where $\rho(x)=r(x)/\sqrt{1-r(x)^2}.$

\item Define $B''(x)$ as 
\begin{align*}
B''(x)=\frac{1}{\sqrt{1-s^2} }\Big(&\begin{bmatrix}
    \cos \theta'    & -\sin\theta'   \\
    \sin\theta'      & \cos \theta'
\end{bmatrix}\\
&+ s
\begin{bmatrix}
    -\cos\tilde \theta(x)      & \sin\tilde \theta (x)  \\
    \sin\tilde \theta (x)     & \cos\tilde \theta (x)  
\end{bmatrix}
R_{-\tau(x)}R_{\beta} \Big)
\end{align*}
where  $s=h_{\epsilon(x)}(t(x))$ and let $\beta=g_{\epsilon(x)}(t(x)).$

\end{enumerate}

Note that, by construction, we have $B''\in C^0(X,S')$ and
\begin{equation}
B(x)u(x)=B''(x)u(x)
\label{eq1}
\end{equation}
for all $x\in X.$

\begin{lem} The cocycle $(T,B'')$ is uniformly hyperbolic.
\end{lem}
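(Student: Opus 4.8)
\emph{Strategy.} The plan is to use only two facts: the uniform hyperbolicity of the cocycle $(T,B)$ furnished by Lemma~\ref{ABD_lemma10}, and the identity $B''(x)u(x)=B(x)u(x)$ for all $x\in X$ recorded in~(\ref{eq1}), where $u\colon X\to\mathbb R^2$ is the continuous unit section with $u(x)\in E^u_x$ for the unstable bundle $E^u$ of $(T,B)$. No further feature of the construction of $B''$ will be needed beyond~(\ref{eq1}) and the fact, already checked when $B''$ was defined, that $B''\in C^0(X,S')\subset C^0(X,SL(2,\mathbb R))$. Concretely, I would (i) turn hyperbolicity of $(T,B)$ into a uniform exponential lower bound for $\|B^n(x)u(x)\|$; (ii) observe that $(B'')^n(x)u(x)=B^n(x)u(x)$ for every $n\ge1$ and $x\in X$, since both sides are the same telescoping product of scalars along the orbit of $x$; and (iii) conclude via the norm criterion for uniform hyperbolicity recalled after Definition~\ref{UH}.

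For step (i): since $(T,B)$ is uniformly hyperbolic, Definition~\ref{UH} provides $C>0$ and $\lambda<1$ with $B(x)E^u_x=E^u_{T(x)}$ and $\|B^{-n}(x)v^u\|\le C\lambda^n\|v^u\|$ for all $v^u\in E^u_x$ and $n\ge1$. Applying this to $v^u=B^n(x)u(x)\in E^u_{T^n(x)}$ gives $1=\|u(x)\|=\|B^{-n}(T^n(x))B^n(x)u(x)\|\le C\lambda^n\|B^n(x)u(x)\|$, so $\|B^n(x)u(x)\|\ge c\sigma^n$ for all $x\in X$ and $n\ge1$, with $c:=C^{-1}$ and $\sigma:=\lambda^{-1}>1$.

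For step (ii): because $B(x)E^u_x=E^u_{T(x)}$ and $u(T(x))$ spans $E^u_{T(x)}$, there is a scalar $\mu(x)\in\mathbb R$ with $B(x)u(x)=\mu(x)\,u(T(x))$. By~(\ref{eq1}) we also have $B''(x)u(x)=\mu(x)\,u(T(x))$, so $u$ is again an invariant section for $(T,B'')$. Iterating either identity along the orbit of $x$ gives $B^n(x)u(x)=\big(\prod_{j=0}^{n-1}\mu(T^j(x))\big)u(T^n(x))=(B'')^n(x)u(x)$ for all $x\in X$ and $n\ge1$; hence $\|(B'')^n(x)u(x)\|=\|B^n(x)u(x)\|\ge c\sigma^n$.

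For step (iii): since $u(x)$ is a unit vector, $\|(B'')^n(x)\|\ge\|(B'')^n(x)u(x)\|\ge c\sigma^n>\frac{c}{2}\,\sigma^n$ for every $x\in X$ and $n\ge1$. By the equivalent characterization of uniform hyperbolicity stated after Definition~\ref{UH} (see~\cite{MR3289050}), $(T,B'')$ is uniformly hyperbolic. I do not expect a genuine obstacle in carrying this out: the entire difficulty has already been front-loaded into the construction of $B''$ that makes~(\ref{eq1}) hold, and the only point demanding a moment's care is recognizing that~(\ref{eq1}) is precisely the datum needed, since it forces $B''$ and $B$ to act identically on the section $u$ and therefore to have the same, uniformly exponentially growing, expansion factors along every orbit.
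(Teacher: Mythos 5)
Your proposal is correct and follows essentially the same route as the paper: derive the exponential lower bound $\|B^n(x)u(x)\|\ge C^{-1}\lambda^{-n}$ from the backward contraction along the unstable section, transfer it to $B''$ via the identity $B''(x)u(x)=B(x)u(x)$, and conclude with the norm criterion for uniform hyperbolicity. The only difference is cosmetic: you spell out, via the scalar $\mu(x)$ and an induction along the orbit, why $(B'')^n(x)u(x)=B^n(x)u(x)$ for all $n\ge1$, a step the paper asserts directly from~(\ref{eq1}).
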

\begin{proof}

Recall that $B\in C^0_{A,\overline V}(X,SL(2,\mathbb R))$ is chosen so that $(T,B)$ is uniformly hyperbolic. By the condition (2) in the definition \ref{UH}, we have
$$||B^{-n}(T^n(x))u(T^n(x))||\leq C\lambda^n$$
for all $x\in X$ and $n\geq 1.$

Recall $B^k(x)$ where $ k\in \mathbb Z$ is defined by $(T,B)^k=(T^k, B^k).$ Therefore, we have
$$B^{-n}(T^n(x))=B(x)^{-1}\cdots B(T^{n-1}(x))^{-1}$$
and
$$B^n(x)=B(T^{n-1})\cdots B(x).$$
In other word, 
\begin{equation}
[B^n(x)]^{-1}=B^{-n}(T^n(x)).
\label{eq2}
\end{equation}
Thus,
$$|| B^n(x) B^{-n}(T^n(x)) u(T^n(x)   ||=|| u(T^n(x))||=1.$$

On the other hand, by the condition (1) in the definition \ref{UH}, we have
$$B^n(x)E^u_x=E^u_{T^n(x)},$$
which is equivalent to 
$$B^{-n}(T^n(x))E^u_{T^n(x)}=E^u_{x}$$
by the equation (\ref{eq2}).

This implies
$$B^{-n}(T^n(x))u(T^n(x))=\pm ||  B^{-n}(T^n(x))u(T^n(x))   ||u(x).$$
Thus, by applying $B^n(x)$ on both sides and taking norm,
\begin{align*}
|| B^n(x) B^{-n}(T^n(x)) u(T^n(x)   ||&=     ||  B^{-n}(T^n(x))u(T^n(x))   ||\cdot  || B^n(x)  u(x)   ||\\
&\leq C\lambda^n  || B^n(x)  u(x)   ||
\end{align*}
for all $x\in X$ and $n\geq 1$ where the last inequality holds by the condition (2) of the definition \ref{UH}.

Together with $|| B^n(x) B^{-n}(T^n(x)) u(T^n(x)   ||=1,$ this implies 
$$ || B^n(x)  u(x)   || \geq C^{-1}\lambda^{-n}$$
for all $x\in X$ and $n\geq 1.$

Therefore, 
$$|| (B'')^n(x)   ||\geq  || (B'')^n(x)  u(x)   || =|| B^n(x)  u(x)   || \geq C^{-1}\lambda^{-n}$$
for all $x\in X$ and $n\geq 1$ where the equality holds by the equation (\ref{eq1}).
This implies that $(T,B'')$ is uniformly hyperbolic.

\end{proof}

\begin{prop}
Let $f\in C^0(X,\mathbb D)$ with $f(x)\neq 0$ for some $x\in X$ and let $A=W^{-1}\overline A(f, z)W\in C^0(X,S')$. Suppose that $(T,A)$ is not uniformly hyperbolic. Given $\epsilon'>0$, there exists $B''\in C^0(X,S')$ such that 
$||A-B''||_{C^0}<\epsilon'$ and $(T,B'')$ is uniformly hyperbolic.
\label{CMV_prop1}
\end{prop}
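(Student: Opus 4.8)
The plan is to assemble, with quantitative choices of the perturbation size, all the machinery built above: the chain $A\rightsquigarrow B\rightsquigarrow B'\rightsquigarrow B''$ together with the last two lemmas, and then conclude by the triangle inequality. Since the structural content is already in hand — $B''\in C^0(X,S')$ by construction, and $(T,B'')$ uniformly hyperbolic by the preceding lemma via the identity $B''(x)u(x)=B(x)u(x)$ — only the calibration remains.

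First I would fix a nonempty open $V$ with $\overline V\subset\{x:f(x)\neq 0\}$ and $r_1\le|f(x)|\le r_2<1$ on $\overline V$, which is possible since $f$ is continuous and nonzero somewhere. Because $A$ is homotopic to a constant cocycle — hence lies in \emph{Ruth} — the density of uniform hyperbolicity in \emph{Ruth} (\cite{MR2477761}, Theorem~2) combined with Lemma~\ref{ABD_lemma10} yields, for every $\delta>0$, a uniformly hyperbolic $(T,B)$ with $B\in C^0_{A,\overline V}(X,SL(2,\mathbb R))$ and $\|A-B\|_{C^0}<\delta$; this is exactly the statement recorded before the proposition. From this $B$ I would form $B'$ and then $B''$ as in (I)--(III). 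Recall $B$, $B'$, and $B''$ all coincide with $A$ off $\overline V$.

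Next I would bound $\|A-B''\|_{C^0}$, which by the last remark equals $\sup_{x\in\overline V}\|A(x)-B''(x)\|$, using
$$\|A(x)-B''(x)\|\le\|A(x)-B(x)\|+\|B(x)-B'(x)\|+\|B'(x)-B''(x)\|.$$
The first term is $<\delta$. The second is $<\epsilon'/3$ once $\delta$ is small, by the lemma giving $\|B-B'\|_{C^0}$ small when $B$ is $C^0$-close to $A$. For the third term I would note that when $B=A$ the block $\begin{bmatrix}b_{11}&b_{12}\\b_{21}&b_{22}\end{bmatrix}$ is the reflection $\begin{bmatrix}-\cos\theta&\sin\theta\\\sin\theta&\cos\theta\end{bmatrix}$, so $\epsilon(x)=\sqrt{y_{11}(x)^2+y_{21}(x)^2}\equiv 1$; hence for $B$ close to $A$ one has $\epsilon(x)\to 1$ uniformly, and we may keep $\underline\epsilon<\epsilon(x)<\overline\epsilon$ and $t(x)\in N$. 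Since $h,g$ are jointly continuous with $h(1,\cdot)=r$ and $g(1,\cdot)=0$ (this is the content of Lemma~\ref{h_lemma} and Lemma~\ref{g_lemma}, in particular part (e) of each), uniform continuity on a compact neighborhood of $\{1\}\times N$ gives $s=h_{\epsilon(x)}(t(x))\to r(x)$ and $\beta=g_{\epsilon(x)}(t(x))\to 0$ uniformly in $x\in\overline V$; as $r(x)\in[r_1,r_2]$ is bounded away from $1$, this forces $1/\sqrt{1-s^2}\to 1/\sqrt{1-r(x)^2}$ and $R_\beta\to\mathrm{id}$ uniformly, and a term-by-term comparison of the formulas defining $B''(x)$ and $B'(x)$ then gives $\|B'-B''\|_{C^0}\to 0$, hence $<\epsilon'/3$ once $\delta$ is small. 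Choosing $\delta$ so that all three terms are $<\epsilon'/3$ and applying the triangle inequality gives $\|A-B''\|_{C^0}<\epsilon'$, while $(T,B'')$ is uniformly hyperbolic, which proves the proposition.

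I expect the main obstacle to be this last estimate: the passage from $B'$ to $B''$ — whose only role is to repair membership in $S'$ while keeping $B''(x)u(x)=B(x)u(x)$, so that uniform hyperbolicity is retained — must be shown to be a uniformly small correction, and it is precisely the continuity, monotonicity, and Dini-type uniform convergence of $h_\epsilon,g_\epsilon$ as $\epsilon\to 1$ that deliver this. One must also verify en route that the auxiliary continuous data ($u$, $\tau$, $\tilde\theta$, $\omega$, $t$, $\epsilon$) are well defined and that $\epsilon(x)$ and $t(x)$ stay in the domains of $h_\epsilon$ and $g_\epsilon$; all of this is secured by taking $B$ sufficiently $C^0$-close to $A$.
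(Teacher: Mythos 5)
Your proposal is correct and follows essentially the same route as the paper: the chain $A\rightsquigarrow B\rightsquigarrow B'\rightsquigarrow B''$ via Lemma~\ref{ABD_lemma10}, the $\|B-B'\|$ lemma, the properties of $h_\epsilon,g_\epsilon$ (Lemmas~\ref{h_lemma} and~\ref{g_lemma}) to make the $B'\to B''$ correction uniformly small once $\epsilon(x)$ is uniformly near $1$, and the three-term triangle inequality. Your appeal to joint/uniform continuity of $h,g$ near $\{1\}\times N$ is just a mild repackaging of the paper's use of parts (c)--(e) of those lemmas, so there is no substantive difference.
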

\begin{proof}

Write $A$ as in Lemma \ref{A}. Let $\epsilon'>0$ be given. Let $y\in X$ be such that $r(y):=r\neq 0.$ Choose an open set $V\subset X$ such that $y\in V$ and $r(x)\neq 0$ for all $x\in \overline V.$ We may assume that $r_1\leq r(x)\leq r_2$ for all $x\in \overline V$ where $r_1,r_2\in [0,1).$

Choose $\delta_1,\delta_2>0$ so that $|s- r_2|<\delta_1$ and $|\beta|<\delta_2$ imply

\begin{align*}
\Big|\Big|&\frac{1}{\sqrt{1-r_2^2} }\begin{bmatrix}
    \cos \theta'    & -\sin\theta'   \\
    \sin\theta'      & \cos \theta'
\end{bmatrix}
-
\frac{1}{\sqrt{1-s^2} }\begin{bmatrix}
    \cos \theta'    & -\sin\theta'   \\
    \sin\theta'      & \cos \theta'
\end{bmatrix}\Big|\Big|\\
&+  \Big|\Big|\frac{r_2}{\sqrt{1-r^2_2}}
\begin{bmatrix}
    -\cos \eta      & \sin \eta   \\
    \sin \eta    & \cos\eta 
\end{bmatrix}
 -
\frac{s}{\sqrt{1-s^2} }\begin{bmatrix}
    -\cos \eta      & \sin \eta   \\
    \sin \eta    & \cos\eta 
\end{bmatrix}
\Big|\Big|\\
&+  \Big|\Big|\frac{s}{\sqrt{1-s^2} }
\begin{bmatrix}
    -\cos \eta      & \sin \eta   \\
    \sin \eta    & \cos\eta 
\end{bmatrix}
 -
\frac{s}{\sqrt{1-s^2} }\begin{bmatrix}
    -\cos \eta      & \sin \eta   \\
    \sin \eta    & \cos\eta 
\end{bmatrix}R_\beta
\Big|\Big|<\frac{\epsilon'}3
\end{align*}
for all $\eta\in \mathbb R.$

Then, given $r\in [r_1,r_2]$, $\bar s$ with $|r-\bar s|<\delta_1$ and $|\beta|<\delta_2.$ we have
\begin{align*}
\Big|\Big|&\frac{1}{\sqrt{1-r^2} }\Big(\begin{bmatrix}
    \cos \theta'    & -\sin\theta'   \\
    \sin\theta'      & \cos \theta'
\end{bmatrix}
+  r
\begin{bmatrix}
    -\cos \eta      & \sin \eta   \\
    \sin \eta    & \cos\eta 
\end{bmatrix}
 \Big)\\
&- \frac{1}{\sqrt{1-\bar s^2} }\Big(\begin{bmatrix}
    \cos \theta'    & -\sin\theta'   \\
    \sin\theta'      & \cos \theta'
\end{bmatrix}
+ \bar s
\begin{bmatrix}
    -\cos \eta      & \sin \eta   \\
    \sin \eta    & \cos\eta 
\end{bmatrix}
R_\beta\Big)
\Big|\Big|<\frac{\epsilon'}3
\end{align*}
for all $\eta\in \mathbb R.$

Let $N\subset \mathbb R^2$ be the annulus of radius $r_1/\sqrt{1-r_1^2}\leq \rho \leq r_2/\sqrt{1-r_2^2}.$ Define $l:N\to [0,1)$ as $l(t):=r$ when $|t|=r/\sqrt{1-r^2}$ and define $h_\epsilon:N\to [0,1)$ and $g_\epsilon:N\to [-\pi, \pi]$ as above. By parts (c),(d) and (e) of Lemmas \ref{h_lemma} and \ref{g_lemma}, we may choose $\delta>0$ so that $|1-\epsilon|<\delta$ implies $|h_\epsilon (t)-l(t)|<\delta_1$ and $|g_\epsilon(t)|<\delta_2$ for all $t\in N$.

By Lemma \ref{ABD_lemma10}, we may choose $B\in C^0_{A,\overline V}(X,SL(2,\mathbb R)),$ which is arbitrarily close to $A$ and $(T,B)$ is uniformly hyperbolic. Let $B\in C^0_{A,\overline V}(X,SL(2,\mathbb R))$ be close enough to $A$ so that $|\epsilon(x)-1|<\delta$ for all $x\in X$ where $\epsilon(x):=\sqrt{y_{11}(x)^2+y_{21}(x)^2}$. (See (\ref{Y}).) Define $B'\in C^0(X,S')$ and $B''\in C^0(X,S')$ as previously. Then, we have $|h_{\epsilon(x)} (t(x))-r(x)|<\delta_1$ and $|g_{\epsilon(x)}(t(x))|<\delta_2$ for all $x\in X.$ (Note that $l(t(x))=r(x)$.) With such $B,$ we have $||B'-B''||_{C^0}<\epsilon'/3.$

From here, choose a $C^0-$closer $B$ (if necessary) so that $||A-B||_{C^0}<\epsilon'/3$ and $ ||B-B'||_{C^0}<\epsilon'/3.$  In conclusion, we have
$$||A-B''||_{C^0}\leq ||A-B||_{C^0}+||B-B'||_{C^0}+||B'-B''||_{C^0}<\epsilon'$$
while $B''\in C^0(X,S')$ and $(T,B'')$ is uniformly hyperbolic.

\end{proof}

\emph{Proof of Theorem 1:}

Let $f\in C^0(X,\mathbb D)$ and suppose that $f$ is not identically zero. Let $A:=W^{-1}\overline A(f,z)W\in C^0(X,S')$ and suppose that $(T,A)$ is not uniformly hyperbolic. Choose $B\in C^0(X,SL (2,\mathbb R))$, $B'\in C^0(X,S')$ and $B''\in C^0(X, S')$ as in the proof of Proposition \ref{CMV_prop1}. 
%Let $\alpha (x)= r(x)e^{i(\theta(x)-\frac {\theta'} 2)}.$ Then, we have $WA(x)W^{-1}=A(\alpha(x), z)$

 Define $\beta:X\to \mathbb D$ as $\beta(x)=h_{\epsilon(x)}(t(x))e^{i\eta(x)}$ where 
$$\eta(x)=\tilde \theta(x)-\tau(x)+g_{\epsilon (x)}(t(x))-{\theta'}.$$
Note that we have $\beta\in C^0(X,\mathbb D)$ and 
\begin{align*}
W^{-1}\overline A (\beta(x),z)W&=W^{-1}\frac{1}{z^{1/2}\sqrt{1-|\beta(x)|^2}}\begin{bmatrix}
    z    & -\bar\beta(x)   \\
    -\beta(x)z     & 1
\end{bmatrix}W\\
&=B''(x).
\end{align*}

Since $W$ is unitary and $||A-B''||_{C^0}<\epsilon ',$ we have
$$
||WAW^{-1}-WB''W^{-1}||_{C^0}=||\overline A(f,z)-\overline A(\beta,z)||_{C^0}<\epsilon'.
$$
This implies that we may choose $\beta\in C^0(X,\mathbb D),$ which is arbitrarily $C^0-$close to $f$ and $(T,\overline A(\beta,z))$ is uniformly hyperbolic. 

Now suppose that $f\in C^0(X,\mathbb D)$ is identically zero. Then, we may choose $f'\in C^0(X,\mathbb D)$ such that it is not identically zero and arbitrarily $C^0-$close to $f.$ With $f'$, we may repeat a similar procedure as above.

Now, for $z\in \partial \mathbb  D,$ consider the set 
$$UH_z=\{f\in C^0(X,\mathbb D): (T,\overline A(f,z))\text{ is uniformly hyperbolic}\}
.$$

Then, $UH_z$ is open and dense by the previous argument. Thus, we may choose a countable dense subset $\{z_n\}$ of $\partial \mathbb D$ to conclude that for $f\in \bigcap_n UH_{z_n}$, the set $\partial \mathbb D\setminus \Sigma$ is dense. Together with the result in \cite{MR3543643}, the associated CMV operators have Cantor spectrum for a generic $f\in C^0(X,\mathbb D).$

\subsection{Proof for Jacobi matrices}

Let $f_a,f_b\in C^0(X,\mathbb R)$ with $f_a(x)>0$ for all $x\in X.$ Fix $x\in X$ and let $H_x$ be a two-sided Jacobi matrix with $a_n=f_a(T^n x)$, $b_n=f_b(T^n x).$

\begin{comment}
A sequence $\{u_n\}$ is a solution of $(H_x-E)u=0$ with $ E\in\mathbb R$ if and only if
$$a_nu_{n+1}+(b_n-E)u_n+a_{n-1}u_{n-1}=0.$$

Equivalently, $\{u_n\}$ obeys
$$\begin{bmatrix}
    u_{n}    \\
    a_{n-1}u_{n-1}     
\end{bmatrix}=
A^n_{E,a,b} (x)
\begin{bmatrix}
    u_{0}    \\
    a_{-1}u_{-1}     
\end{bmatrix}
$$
where
$$A_{E,a,b} (x)=\frac 1 {a(x)}\begin{bmatrix}
    E-b(x)    & -1  \\
    a(x)^2     & 0
\end{bmatrix}
$$

Let $\Sigma$ be the spectrum of $H_x.$ In fact, we have $\Sigma=\sigma(H_y)$ for all $y\in X$ by the minimality of $f.$ Marx (2014) showed that  
$$\mathbb R\setminus \Sigma=\{E\in \mathbb R | (f,A_{E,a,b}) \text{ is uniformly hyperbolic}\}.$$

In fact, Theorem 2.1 of Marx (2014) implies that 
$$\mathbb R\setminus \Sigma=\{E\in \mathbb R | (f,B_{E,a,b}) \text{ is uniformly hyperbolic}\}$$
where
$$B_{E,a,b}(x)= \frac{1}{a(f^{-1}x)}\begin{bmatrix}
    E-b(x)    & - a(f^{-1}x)^2   \\
    1    & 0
\end{bmatrix}$$
However, $A_{E,a,b}$ and $B_{E,a,b}$ are conjugate to each other, i.e., 
$$D(f(x))B_{E,a,b}(x)D(x)^{-1}=A_{E,a,b}(x)$$
where 
$$D(x)= \begin{bmatrix}
    \frac{1}{a(f^{-1}x)}    & 0   \\
    0   & a(f^{-1}x)
\end{bmatrix}$$

Thus $A_{E,a,b}$ is uniformly hyperbolic if and only if $B_{E,a,b}$ is uniformly hyperbolic.

\end{comment}

Define $J\subset SL(2,\mathbb R)$ as
$$J=\Big\{ \begin{bmatrix}
    t    & -\frac 1 a   \\
    a     & 0
\end{bmatrix}\in SL(2,\mathbb R) | t,a\in \mathbb R, a> 0
 \Big\}.$$
 
 Then, we have $A_{E,a,b}\in C^0(X, J).$

\begin{lem}
Let $K\subset X$ be a compact set such that $K\cap T(X)=\emptyset$ and $K\cap T^2(X)=\emptyset.$ Let $A\in C^0(X,J)$ be such that for every $x\in K,$ we have $\emph{tr} A(x)\neq 0$. Then there exist an open neighborhood $\mathcal W_{A,K}\subset C^0_{A,K}(X,SL(2,\mathbb R))$ of $A$ and continuous maps 
$$\Phi=\Phi_{A,K}:\mathcal W_{A,K}\to C^0(X,J)$$
and 
$$\Psi=\Psi_{A,K}:\mathcal W_{A,K}\to C^0(X,SL(2,\mathbb R))$$
satisfying 
$$\Psi(B)(T(x))\cdot B(x)\cdot [\Psi(B)(x)]^{-1}=\Phi(B)(x),$$
$$\Phi(A)=A\text{ and } \Psi(A)=id.$$
\label{Jacobi_lem}
\end{lem}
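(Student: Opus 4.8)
The plan is to argue in the spirit of the projection lemma (Lemma \ref{ABD_lemma10}), but carrying along the extra bookkeeping needed to keep the output inside $J$. Note first that $M\in SL(2,\mathbb R)$ lies in $J$ exactly when $M_{22}=0$ and $M_{21}>0$; hence for $A\in C^{0}(X,J)$ one has $A(x)_{22}\equiv 0$, so $\operatorname{tr}A(x)=A(x)_{11}$, and the hypothesis $\operatorname{tr}A(x)\neq 0$ on $K$ says precisely that $A(x)_{11}$ is bounded away from $0$ on the compact set $K$; likewise $A(x)_{21}>0$ is bounded below on $X$. I will build $\Psi(B)$ so that it is the identity off $K\cup T(K)$, and is given on $K$ and on $T(K)$ by explicit formulas in the entries of $B$ which reduce to the identity both when $B=A$ and on $\partial K$ (recall $B=A$ on $\overline{X\setminus K}\supseteq\partial K$ by continuity, since $B=A$ on the open set $X\setminus K$); since $K$ and $T(K)$ are disjoint compact sets they have positive distance, so such a $\Psi(B)$ is continuous.

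The construction rests on three elementary facts about $J$. (i) If $N$ is upper unipotent and $M\in J$, then $NM\in J$ and $(NM)_{21}=M_{21}$. (ii) If $N$ is lower unipotent and $M\in J$, then $MN^{-1}\in J$ and $(MN^{-1})_{21}=M_{21}$. (iii) If $M$ is close enough to some $M_{0}\in J$ with $(M_{0})_{11}\neq 0$, then there is a unique pair $(N_{u}(M),N_{\ell}(M))$, with $N_{u}$ upper unipotent and $N_{\ell}$ lower unipotent, depending continuously on $M$ and equal to $(\mathrm{id},\mathrm{id})$ at $M=M_{0}$, such that $N_{\ell}(M)\,M\,N_{u}(M)^{-1}\in J$ with $(2,1)$-entry equal to $(M_{0})_{21}$. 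For (iii): the $(1,2)$-entry of $MN_{u}^{-1}$ is affine in the parameter of $N_{u}$ with slope $-M_{11}\approx -(M_{0})_{11}\neq 0$, so one solves for it to equal $-1/(M_{0})_{21}$; then $N_{\ell}$ is forced by annihilating the $(2,2)$-entry, and a one-line determinant computation shows the $(2,1)$-entry then automatically equals $(M_{0})_{21}>0$.

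Now shrink $\mathcal W_{A,K}$ so that for every $B$ in it, $B(z)_{11}$ stays uniformly away from $0$ on $K$. For $z\in K$ put $Q(z):=N_{u}(B(z))$ and $P(z):=N_{\ell}(B(z))$ via (iii) with $M=B(z)$, $M_{0}=A(z)$, and define $\Psi(B)$ by $\Psi(B)(z)=Q(z)$ for $z\in K$, $\Psi(B)(Tz)=P(z)$ for $z\in K$ (unambiguous since $T$ is injective and $K\cap T(K)=\emptyset$), and $\Psi(B)(y)=\mathrm{id}$ otherwise; then $\Psi(A)=\mathrm{id}$, and $\Phi(B)$ is defined by $\Phi(B)(x)=\Psi(B)(Tx)B(x)\Psi(B)(x)^{-1}$, so $\Phi(A)=A$. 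To see $\Phi(B)(x)\in J$ for all $x$, split $X$ into the four pieces $K$, $T(K)$, $T^{-1}(K)$ and the remainder, which are pairwise disjoint by the two disjointness hypotheses on $K$, $T(K)$, $T^{2}(K)$. On $K$: $\Phi(B)(x)=P(x)B(x)Q(x)^{-1}\in J$ by (iii). On $T(K)$, with $x=Tz$, $z\in K$: here $\Psi(B)(Tx)=\Psi(B)(T^{2}z)=\mathrm{id}$ (as $T^{2}z\notin K\cup T(K)$, using $K\cap T^{2}(K)=\emptyset$) and $B(Tz)=A(Tz)$, so $\Phi(B)(x)=A(Tz)P(z)^{-1}\in J$ by (ii). On $T^{-1}(K)$, with $z=Tx\in K$: here $\Psi(B)(x)=\mathrm{id}$ and $B(x)=A(x)$, so $\Phi(B)(x)=Q(z)A(x)\in J$ by (i). On the remainder $\Psi(B)(x)=\Psi(B)(Tx)=\mathrm{id}$ and $B(x)=A(x)$, so $\Phi(B)(x)=A(x)\in J$. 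Continuity of $\Phi$ and $\Psi$ in $B$ is clear from the formulas. (In fact $\Phi(B)(x)_{21}=A(x)_{21}$ for all $x$, so when $A=A_{E,a,b}$ the cocycle $\Phi(B)$ is again a Jacobi cocycle with the same $f_{a}$, which is exactly what the Jacobi application needs.)

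I expect the real obstacle to be this last global verification: arranging that $\Phi(B)$ lands in $J$ not merely on $K$ but also at the points of $T(K)$ and $T^{-1}(K)$, where one of $P(z),Q(z)$ sits against $A$ on a single side. This is what forces the conjugants to be unipotent — so that, by (i)--(ii), multiplying an element of $J$ by them on the appropriate side preserves the vanishing of the $(2,2)$-entry — and what forces $\Psi(B)$ to be supported on the disjoint two-step blocks $\{z,Tz\}$, $z\in K$; the pairwise disjointness of $K$, $T(K)$, $T^{2}(K)$ is precisely what guarantees that no orbit meets this support at two times outside a single such block, so that the case analysis closes. The hypothesis $\operatorname{tr}A\neq 0$ on $K$ enters only as the nondegeneracy $(M_{0})_{11}\neq 0$ that makes fact (iii) applicable.
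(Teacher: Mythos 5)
Your proposal is correct, and it proves the same statement by the same underlying local construction as the paper, but organized in reverse. The paper builds $\Phi(B)$ first: for $x\in K$ it writes out the three-fold product $B(T(x))B(x)B(T^{-1}(x))$ and solves explicitly for new Jacobi data $b_1',b_2',b_3'$ (keeping the $a$-entries fixed) so that three $J$-matrices reproduce this product — the trace hypothesis gives $p=B(x)_{11}\neq 0$, which makes the system solvable — and only then defines $\Psi(B)$ on $K\cup T(K)$ as the partial products $\Phi(B)(T^{-1}(x))[B(T^{-1}(x))]^{-1}$, etc. You instead build $\Psi(B)$ first, as explicit upper/lower unipotent conjugators supported on $K\cup T(K)$, and verify $\Phi(B)\in J$ on each of the disjoint pieces $T^{-1}(K),K,T(K)$ via your facts (i)--(iii) about one-sided unipotent multiplication preserving $J$; your nondegeneracy $B_{11}\neq 0$ on $K$ is exactly the paper's $p\neq 0$, i.e.\ the trace hypothesis. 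In fact the two constructions produce literally the same maps: since both keep the $(2,1)$-entries equal to those of $A$ and both preserve the three-step product over the block (yours automatically, because $\Psi(B)$ is trivial outside it), and the matching equations with the $a$-entries fixed have a unique solution once the middle $(1,1)$-entry is nonzero, your $\Phi(B)$ coincides with the paper's, and unwinding the paper's $\Psi$ gives exactly your $Q(x)$ on $K$ and $P(z)$ on $T(K)$. What each version buys: the paper's computation displays the perturbed coefficients $b_i'$ explicitly (making it transparent that only $f_b$ is moved, which Theorem \ref{Jacobi_thm} needs), while your factorization argument is coordinate-light, isolates why the trace condition is the only obstruction, and yields the $(2,1)$-preservation (hence unchanged $f_a$) as a structural corollary rather than a by-product of the formulas. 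Your continuity argument for $\Psi(B)$ (identity values forced on $\partial K$ and $T(\partial K)$ because $B=A$ on $\overline{X\setminus K}$, plus disjointness of $K$ and $T(K)$) is the standard pasting argument and is fine; like the paper, you read the hypotheses $K\cap T(X)=\emptyset$, $K\cap T^2(X)=\emptyset$ as the evidently intended pairwise disjointness of $K$, $T(K)$, $T^2(K)$.
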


\begin{proof}
Let $B\in C^0_{A,K}(X, SL(2,\mathbb R))$. Let $\Phi(B)(x)=A(x)$ if $x\notin \bigcup_{i=-1}^{1}T^i(K).$ Fix $x\in K$. Let
\begin{align*}
B(T(x))B(x)B(T^{-1}(x))&=\begin{bmatrix}
    t _1   & -\frac 1 {a_1}   \\
    a_1     & 0
\end{bmatrix}\begin{bmatrix}
    p    & q   \\
    r     & s
\end{bmatrix}\begin{bmatrix}
    t_3    & -\frac 1 {a_3}   \\
    a_3     & 0
\end{bmatrix}\\
&=\begin{bmatrix}
    t_1(pt_3+qa_3)-\frac{rt_3+sa_3}{a_1}    & -\frac {pt_1} {a_3}+\frac{r}{a_1a_3}   \\
    a_1(pt_3+qa_3)     & -\frac{pa_1}{a_3}
\end{bmatrix}
\end{align*}

and let 
\begin{align*}
A(x)&=\begin{bmatrix}
    t _2   & -\frac 1 {a_2}   \\
    a_2     & 0
\end{bmatrix}.
\end{align*}

For $x\in \bigcup_{i=-1}^{1}T^i(K)$, our goal is to define $\Phi(B)(x)$ so that 
$$\Phi(B)(T(x))\Phi(B)(x)\Phi(B)(T^{-1}(x))=\begin{bmatrix}
    t_1(pt_3+qa_3)-\frac{rt_3+sa_3}{a_1}    & -\frac {pt_1} {a_3}+\frac{r}{a_1a_3}   \\
    a_1(pt_3+qa_3)     & -\frac{pa_1}{a_3}
\end{bmatrix}$$
while we have $\Phi(B)(x)\in J.$

By a simple calculation,
\begin{align*}\begin{bmatrix}
    t_1'   & -\frac{1}{a_1'} \\
    a_1'   & 0
\end{bmatrix}
\begin{bmatrix}
    t_2'   & -\frac{1}{a_2'} \\
    a_2'   & 0
\end{bmatrix}
\begin{bmatrix}
    t_3'   & -\frac{1}{a_3'} \\
    a_3'   & 0
\end{bmatrix}
=\begin{bmatrix}
    t_1'(t_2't_3'-\frac{a_3'}{a_2'})-\frac{a_2'a_3'}{a_1'}   & -\frac{t_1't_2'}{a_3'}+\frac{a_2'}{a_1'a_3'} \\
    a_1'(t_2't_3'-\frac{a_3'}{a_2'})   & -\frac{t_2'a_1'}{a_3'}
\end{bmatrix}.
\end{align*}

Set $a_1'=a_1$, $a_2'=a_2$ and $a_3'=a_3.$ We may write $t_i'$, $i=1,2,3$ as
$$t_i'=\frac{E-b_i'}{a_i'}$$
where $E,b_i'\in \mathbb R.$
Set 
$$b_2'=E-pa_2,$$
$$b_3'=E-a_3'\frac{pt_3+qa_3+\frac{a_3'}{a_2'}}{t_2'}$$
and
$$b_1'=E-\frac{a_1'a_3'}{t_2'}\Big(\frac{a_2'}{a_1'a_3'}+\frac{pt_1}{a_3}-\frac{r}{a_1a_3} \Big).$$

Note that we have $p\neq 0$ by choosing a proper neighborhood $\mathcal W_{A,K}$ of $A$ since we assume $trA(x)\neq 0$ for all $x\in K$. This, in turn, implies that $t_2'\neq 0.$ 

By setting 
$$\Phi(B)(T(x))=\begin{bmatrix}
    t_1'   & -\frac{1}{a_1'} \\
    a_1'   & 0
\end{bmatrix},
\Phi(B)(x)=\begin{bmatrix}
    t_2'   & -\frac{1}{a_2'} \\
    a_2'   & 0
\end{bmatrix}
$$
and
$$
\Phi(B)(T^{-1}(x))=\begin{bmatrix}
    t_3'   & -\frac{1}{a_3'} \\
    a_3'   & 0
\end{bmatrix},$$
we have 
$$\Phi(B)(T(x))\Phi(B)(x)\Phi(B)(T^{-1}(x))=B(T(x))B(x)B(T^{-1}(x)).$$

Let $\Psi(B)(x)=id$ for $x\notin K\cup T(K)$ and let 
$$\Psi(B)(x)=\Phi(B)(T^{-1}(x))\cdot [B(T^{-1}(x)]^{-1}$$ for $x\in K.$ Let 
$$\Psi(B)(x)=\Phi(B)(T^{-1}(x))\cdot\Phi(B)(T^{-2}(x))\cdot  [B(T^{-2}(x)]^{-1}\cdot  [B(T^{-1}(x)]^{-1}$$ for $x\in T(K).$ All properties are easy to check.

\end{proof}

By combining Lemma \ref{ABD_lemma10} and Lemma \ref{Jacobi_lem}, we obtain

\begin{prop}
Let $A\in C^0(X,J)$ be a map whose trace is not identically zero. Then there exist an open neighborhood $\mathcal W \subset C^0(X,SL(2,\mathbb R))$ of $A$ and continuous maps 
$$\Phi=\Phi_{A}:\mathcal W\to C^0(X,J)\text{ and }\Psi=\Psi_{A}:\mathcal W\to C^0(X,SL(2,\mathbb R))$$
satisfying 
$$\Psi(B)(T(x))\cdot B(x)\cdot [\Psi(B)(x)]^{-1}=\Phi(B)(x),$$
$$\Phi(A)=A\text{ and } \Psi(A)=id.$$
\label{HK_prop1}
\end{prop}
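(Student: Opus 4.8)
The plan is to build $\Phi$ and $\Psi$ by composing the conjugacy furnished by Lemma~\ref{ABD_lemma10} with the one furnished by Lemma~\ref{Jacobi_lem}. The first lemma conjugates an arbitrary cocycle $B$ near $A$ to a cocycle $\Phi_{A,V}(B)$ that already agrees with $A$ --- hence lies in $J$ --- outside a prescribed closed set $\overline V$; the second lemma then conjugates that cocycle, whose ``non-$J$ part'' is concentrated in $\overline V$, all the way into $C^0(X,J)$.

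First I would pick $x_0\in X$ with $\operatorname{tr}A(x_0)\neq 0$ and a nonempty open $V\ni x_0$ with $\operatorname{tr}A(x)\neq 0$ for every $x\in\overline V$. Since $T$ is strictly ergodic on an infinite space it is aperiodic, so after shrinking $V$ I may also assume that $\overline V,\ T(\overline V),\ T^2(\overline V)$ are pairwise disjoint; then $K:=\overline V$ satisfies the disjointness hypothesis of Lemma~\ref{Jacobi_lem}, and $\operatorname{tr}A\neq 0$ on $K$. Apply Lemma~\ref{ABD_lemma10} with this $V$ to get $\mathcal W_{A,V}$ and maps $\Phi_{A,V},\Psi_{A,V}$, and apply Lemma~\ref{Jacobi_lem} with $K=\overline V$ to get $\mathcal W_{A,\overline V}\subset C^0_{A,\overline V}(X,SL(2,\mathbb R))$ and maps $\Phi_{A,\overline V},\Psi_{A,\overline V}$.

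Since $\Phi_{A,V}$ is continuous, takes values in $C^0_{A,\overline V}(X,SL(2,\mathbb R))$, and satisfies $\Phi_{A,V}(A)=A\in\mathcal W_{A,\overline V}$, there is an open neighborhood $\mathcal W\subset\mathcal W_{A,V}$ of $A$ with $\Phi_{A,V}(\mathcal W)\subset\mathcal W_{A,\overline V}$. For $B\in\mathcal W$ I would set $\Phi(B):=\Phi_{A,\overline V}(\Phi_{A,V}(B))$ and $\Psi(B)(x):=\Psi_{A,\overline V}(\Phi_{A,V}(B))(x)\cdot\Psi_{A,V}(B)(x)$. Writing $C:=\Phi_{A,V}(B)$ and stacking the two identities $\Psi_{A,V}(B)(T(x))\,B(x)\,\Psi_{A,V}(B)(x)^{-1}=C(x)$ and $\Psi_{A,\overline V}(C)(T(x))\,C(x)\,\Psi_{A,\overline V}(C)(x)^{-1}=\Phi(B)(x)$ yields the desired conjugacy relation for $(\Phi,\Psi)$; the normalizations $\Phi(A)=A$ and $\Psi(A)=\mathrm{id}$ come from $\Phi_{A,V}(A)=A$, $\Psi_{A,V}(A)=\mathrm{id}$, $\Phi_{A,\overline V}(A)=A$, $\Psi_{A,\overline V}(A)=\mathrm{id}$, and continuity of $\Phi$ and $\Psi$ from continuity of the four maps above together with continuity of composition and of pointwise multiplication on $C^0(X,SL(2,\mathbb R))$. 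Finally $\Phi(B)\in C^0(X,J)$ by the conclusion of Lemma~\ref{Jacobi_lem}.

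The neighborhood bookkeeping and the verification of the composed identity are routine; the only point that really needs attention is the choice of $V$, where I must keep $\operatorname{tr}A$ away from $0$ on $\overline V$ and simultaneously keep $\overline V,T(\overline V),T^2(\overline V)$ pairwise disjoint. Both are possible precisely because $\operatorname{tr}A$ is continuous and not identically zero and because $T$ has no periodic points; this is also where the hypothesis ``$\operatorname{tr}A$ not identically zero'' enters.
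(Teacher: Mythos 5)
Your proposal is correct and is essentially the paper's own argument: choose a closed set $K=\overline V$ on which $\operatorname{tr}A\neq 0$ and which is disjoint from $T(K)$ and $T^2(K)$, then compose the two lemmas, setting $\Phi=\Phi_{A,K}\circ\Phi_{A,V}$ and $\Psi=(\Psi_{A,K}\circ\Phi_{A,V})\cdot\Psi_{A,V}$. The only difference is that you spell out the neighborhood bookkeeping (choosing $\mathcal W$ so that $\Phi_{A,V}(\mathcal W)\subset\mathcal W_{A,K}$) and the aperiodicity argument for the disjointness, which the paper leaves implicit.
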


\begin{proof}
 Let $x\in X$ be such that $\emph{tr}A(x)\neq 0.$ Let $V$ be an open neighborhood of $x$ such that with $K=\bar V,$ we have $\emph{tr} A(x)\neq 0$ for $x\in K,$ $K\cap T(X)=\emptyset$ and $K\cap T^2(X)=\emptyset.$ 
 
 Let $\Phi_{A,V}:\mathcal W_{A,V}\to C^0_{A,\bar V}(X,SL(2,\mathbb R)$ and $\Psi_{A,V}:\mathcal W_{A,V}\to C^0(X,\\
 SL(2,\mathbb R))$ be given by Lemma \ref{ABD_lemma10}. Let $\Phi_{A,K}:\mathcal W_{A,K}\to C^0(X,J)\text{ and }\\
 \Psi_{A,K}:\mathcal W_{A,K}\to C^0(X,SL(2,\mathbb R))$ be given by Lemma \ref{Jacobi_lem}. Let $\mathcal W $ be the domain of $\Phi:= \Phi_{A,K}\circ \Phi_{A,V},$ and let $\Psi=(\Psi_{A,K}\circ \Phi_{A,V})\cdot \Psi_{A,V}.$ With $\Phi$ and $\Psi$, the result follows.

 \end{proof}

\begin{rem}
Let us discuss why we do not apply the above procedure in the case of CMV matrices. To make a similar argument as in the proof of Lemma \ref{Jacobi_lem}, one essential part is to construct $\Phi(B)\in C^0(X,S')$ such that 
\begin{align*}
B(T^m(x))\cdots &   B(x)\cdots B(T^{-n}(x))\\
&=\Phi(B)(T^m(x))\cdots\Phi(B)(x)\cdots \Phi(B)(T^{-n}(x))
\end{align*}
for some $m,n\in \mathbb Z_+.$
The conjugation property almost automatically follows then. As we observed in the proof of Lemma \ref{Jacobi_lem}, the construction is related to the solvability of a system of equations. 

If we write $\Phi(B)$ as in Lemma \ref{A}, the product of matrices, 
$$\Phi(B)(T^m(x))\cdots \Phi(B)(x)\cdots \Phi(B)(T^{-n}(x))$$
 may be very complicated. In addition, we have many constraints since a matrix as in Lemma \ref{A} has a very particular form while $0\leq r(x)<1$. 

Alternatively, we may consider a product of matrices as in the form of ${SU}(1,1)$. In this case, the product involves complex numbers and conjugations of those while solutions must be in the complex unit disk. Moreover, we anticipate it is unlikely that solutions are written as linear forms.

\end{rem}

Now, we are ready to prove Theorem \ref{Jacobi_thm}.
 Let $A\in C^0(X,J).$ Recall if $A$ is not uniformly hyperbolic, we may choose $B'\in C^0(X,SL(2,\mathbb R))$ such that $B'$ is arbitrarily $C^0-$close to $A$ and $(T,B')$ is uniformly hyperbolic.

\null

\emph{Proof of Theorem 2:}

For $E\in \mathbb R,$ define the set
$$UH_{E}:=\{f_b\in C^0(X,\mathbb R)| (T, A_{E,a,b}) \text{ is uniformly hyperbolic}   \}.$$

Suppose that a cocycle $(T, A_{E,a,b})$ is not uniformly hyperbolic. If $f_b(x)\neq E$ for some $x\in X,$ $(T, A_{E,a,b})$ can be approximated by a uniformly hyperbolic cocycle $(T,A_{E,a,b'})$ for some $f_{b'}\in C^0(X, \mathbb R)$ by Proposition \ref{HK_prop1}. 

Suppose that $f_b(x)=E$ for all $x\in X.$ Then, we may find $b'\in C^0(X,\mathbb R)$ such that $f_{b'}$ is arbitrarily $C^0$-close to $f_b$ and $f_{b'}(x)\neq E$ for some $x\in X.$ From $(T, A_{E,a,b'})$ we may choose a uniformly hyperbolic cocycle $(T, A_{E,a,b''})$ for some $ f_{b''}\in C^0(X,\mathbb R),$ which is arbitrarily $C^0$-close by Proposition \ref{HK_prop1}. This shows the set $UH_E$ is open and dense.  Thus, there exists a countable dense subset $\{E_n\}\subset \mathbb R$ so that for all $f_b\in \bigcap_n UH_{E_n}$, the set $\mathbb R\setminus \Sigma$ is dense.

\null

\emph{Acknowledgement.}   I am grateful to my thesis advisor David Damanik for numerous conversations and suggestions.

\end{document}